\newtheorem{theorem}{Theorem}[section]
\newtheorem{lemma}[theorem]{Lemma}
\newtheorem{proposition}[theorem]{Proposition}
\newtheorem{corollary}[theorem]{Corollary}
\theoremstyle{definition}
\theoremstyle{remark}
\newtheorem*{remarks}{Remarks}
\newtheorem*{notation}{Notation}
\newtheorem*{organisation}{Organisation}
\newtheorem*{acknowledgements}{Acknowledgements}
\newcommand{\vertiii}[1]{{\left\vert\kern-0.25ex\left\vert\kern-0.25ex\left\vert #1 
\right\vert\kern-0.25ex\right\vert\kern-0.25ex\right\vert}}
\numberwithin{equation}{section}
\def\1{\textbf{\rm 1}}
\begin{document}

\date{\today}
\keywords{Maximal estimate, Strichartz estimate, orthonormal systems}

\subjclass[2010]{35B45 (primary); 35P10, 35B65 (secondary)}
\author[Bez]{Neal Bez}
\address[Neal Bez]{Department of Mathematics, Graduate School of Science and Engineering,
Saitama University, Saitama 338-8570, Japan}
\email{nealbez@mail.saitama-u.ac.jp}
\author[Lee]{Sanghyuk Lee}
\address[Sanghyuk Lee]{Department of Mathematical Sciences, Seoul National University, Seoul 151-747, Korea}
\email{shklee@snu.ac.kr}
\author[Nakamura]{Shohei Nakamura}
\address[Shohei Nakamura]{Department of Mathematics and Information Sciences, Tokyo Metropolitan University,
1-1 Minami-Ohsawa, Hachioji, Tokyo, 192-0397, Japan}
\email{nakamura-shouhei@ed.tmu.ac.jp}

\title[Maximal estimates with orthonormal initial data]{Maximal estimates for the Schr\"{o}dinger equation with orthonormal initial data}

\begin{abstract}
For the one-dimensional Schr\"odinger equation, we obtain sharp maximal-in-time and maximal-in-space estimates for systems of orthonormal initial data. The maximal-in-time estimates generalize a classical result of Kenig--Ponce--Vega and allow us obtain pointwise convergence results associated with systems of infinitely many fermions. The maximal-in-space estimates simultaneously address an endpoint problem raised by Frank--Sabin in their work on Strichartz estimates for orthonormal systems of data, and provide a path toward  proving our maximal-in-time estimates.
\end{abstract}

\maketitle

\section{Introduction and main results}
\subsection{Pointwise convergence for the Schr\"{o}dinger equation}
In one spatial dimension, consider the free Schr\"{o}dinger equation 
\begin{equation}\label{e:SchrodingerEq}
\left\{ \begin{array}{ll}
i\partial_t u + \partial^2_x u =0,\;\;\; (t,x) \in \mathbb{R}^{1+1}, \\
u(0,x) = f(x), 
\end{array} \right.
\end{equation}
whose solution we denote by $u(t,x) = e^{it\partial^2_x}f(x)$. The problem of identifying the smallest exponent $s>0$ for which 
\begin{equation}\label{e:carleson}
\lim_{t\to0} e^{it\partial^2_x} f(x) = f(x) \quad {\rm a.e.}\;\;\; x\in \mathbb{R}
\end{equation}
holds for all $f\in H^s(\mathbb{R})$ originated in the famous paper by Carleson \cite{carleson}. Here, $H^s(\mathbb{R})= (1-\partial_x^2)^{-\frac s2}L^2(\mathbb{R})$ is the inhomogeneous Sobolev space of order $s$. It follows from \cite{carleson} that \eqref{e:carleson} holds true as long as $s\ge\frac14$, and shortly afterwards the problem found a complete solution when Dahlberg and Kenig \cite{Dahlberg-Kenig} showed that $s\ge\frac14$ is necessary for \eqref{e:carleson}. 

The standard way to tackle  this pointwise convergence problem is to consider maximal-in-time estimates of the form 
\begin{equation*}
\big\|\sup_t |e^{it\partial_x^2} f| \big\|_{L^q_x} \le C\|f\|_{H^s}
\end{equation*}
or its local variants,  since standard arguments allow one to deduce \eqref{e:carleson} for all $f \in H^s(\mathbb{R})$. Whilst local space-time bounds of this type suffice for the purpose of deducing \eqref{e:carleson}, a particularly strong global form of such an estimate,
\begin{equation}\label{e:maxi}
\big\| \sup_{t \in \mathbb{R}} |e^{it\partial_x^2} f| \big\|_{L^4_x(\mathbb{R})} \le C\|f\|_{\dot{H}^{\frac{1}{4}}}
\end{equation}
was obtained by Kenig--Ponce--Vega \cite{KPV}. Here, $\dot{H}^s(\mathbb{R})= (-\partial_x^2)^{-\frac s2}L^2(\mathbb{R})$ is the homogeneous Sobolev space of order $s$. Since the estimate \eqref{e:maxi} is scaling invariant and  $\|f\|_{\dot{H}^{\frac{1}{4}}}\lesssim \|f\|_{H^\frac14}$,  we note that, by an elementary rescaling argument, \eqref{e:maxi} is equivalent to the corresponding estimate with the inhomogeneous norm $H^\frac14$ on the right-hand side.

Whilst the equation \eqref{e:SchrodingerEq} describes the behavior of a single quantum particle, the present work is motivated by recent investigations of Chen--Hong--Pavlovi\'{c} \cite{CHP-1,CHP-2} and Lewin--Sabin \cite{LewinSabinWP,LewinSabinScatt} into the dynamics of a system of infinitely many fermions. In the case of a finite number $N$ of particles (in one spatial dimension), such a system is modelled by $N$ orthonormal functions $u_1,\ldots,u_N$ in $L^2(\mathbb{R})$ satisfying a system of Hartree equations
\begin{equation} \label{e:Hartree_finite}
i\partial_tu_k = (-\partial_{x}^2 + w * \rho)u_k \qquad (k = 1,\ldots,N),
\end{equation}
where $\rho = \sum_{j=1}^N |u_j|^2$ represents the total density of particles and $w$ is an interaction potential. 

In this work, we initiate the study of the pointwise convergence problem for a system of infinitely many fermions. As we shall soon see, our progress in this direction hinges on establishing a generalization of \eqref{e:maxi} for orthonormal systems (possibly infinite) of initial data $(f_j)_j$. A natural form of such an estimate is
\begin{equation}\label{e:PureMax}
\bigg\|  \sum_j \nu_j |e^{it\partial_x^2} f_j|^2 \bigg\|_{L^{2}_xL^\infty_t(\mathbb{R}^{1+1})} \le C \| \nu \|_{\ell^\beta}
\end{equation}
with the constant $C$ independent of the orthonormal system $(f_j)_j$ in $\dot{H}^\frac14(\mathbb{R})$ and $\nu = (\nu_j)_j$ in $\ell^\beta$, where $\beta\ge1$. Clearly \eqref{e:PureMax} is equivalent to the square function estimate
\begin{equation*}
\bigg\| \bigg( \sum_j  |e^{it\partial_x^2} f_j|^2 \bigg)^{1/2} \bigg\|_{L^{4}_xL^\infty_t(\mathbb{R}^{1+1})} \le C \bigg( \sum_j \| f_j \|_{\dot{H}^\frac14}^{2\beta} \bigg)^{1/2\beta}
\end{equation*}
for orthogonal systems $(f_j)_j$ in $\dot{H}^\frac14(\mathbb{R})$, and this reduces to \eqref{e:maxi} in the case where the system of initial data consists of a single function. It is also apparent that \eqref{e:PureMax} follows from \eqref{e:maxi} with $\beta = 1$ via the triangle inequality, and that such estimates get stronger as we increase $\beta$. Our first main result establishes the optimal value of $\beta$, although we have to pay a small price to obtain such a sharp result in the sense that our estimates are of weak type.
\begin{theorem}\label{t:maximal}
The estimate
\begin{equation}\label{e:max}
\bigg\|  \sum_j \nu_j |e^{it\partial_x^2} f_j|^2 \bigg\|_{L^{2,\infty}_xL^\infty_t(\mathbb{R}^{1+1})} \le C \| \nu \|_{\ell^\beta}
\end{equation}
holds for all systems of orthonormal functions $(f_j)_j$ in $\dot{H}^\frac14(\mathbb{R})$ and $\nu = (\nu_j)_j$ in $\ell^\beta$ if and only if $\beta < 2$.
Moreover, the restricted weak-type estimate  
$$
\bigg\|  \sum_j \nu_j |e^{it\partial_x^2} f_j|^2  \bigg\|_{L^{2,\infty}_xL^\infty_t(\mathbb{R}^{1+1})} \le C \| \nu \|_{\ell^{2,1}}
$$
also fails. 
\end{theorem}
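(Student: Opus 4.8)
The plan is to treat the two directions separately; for the positive direction (sufficiency of $\beta<2$) the route goes through a maximal-in-space estimate, namely the endpoint orthonormal Strichartz estimate raised by Frank--Sabin. The first step is to reduce \eqref{e:max} to such an estimate using the Kenig--Ponce--Vega device of dominating $\sup_t|e^{it\partial_x^2}f(x)|$ by the geometric mean of the local-smoothing $L^2_t$-norms of $e^{it\partial_x^2}f$ and $e^{it\partial_x^2}\partial_x^2 f$. Applied to the density $\rho=\sum_j\nu_j|e^{it\partial_x^2}f_j|^2$ through Cauchy--Schwarz in $j$, together with the Lorentz--H\"older inequality $\|FG\|_{L^{2,\infty}_x}\lesssim\|F\|_{L^{4,\infty}_x}\|G\|_{L^{4,\infty}_x}$, a Littlewood--Paley decomposition in space (to absorb the frequency-dependent loss from $\partial_x^2$, the factor $2^{2k}$ on the $2^k$-band being compensated by the gain in the smoothing norm) and real interpolation in the summation exponent $\beta$, this reduces \eqref{e:max} to an orthonormal Strichartz estimate at the exponent pair $(2,\infty)$: schematically $\|\sum_j\nu_j|e^{it\partial_x^2}(-\partial_x^2)^{-1/4}g_j|^2\|_Y\lesssim\|\nu\|_{\ell^\beta}$ for $(g_j)_j$ orthonormal in $L^2(\mathbb{R})$, with $Y$ a mixed norm carrying a weak-$L^2$ component in the space variable.

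To prove this endpoint estimate I would invoke the duality principle for orthonormal systems: with $A=e^{it\partial_x^2}(-\partial_x^2)^{-1/4}$, the estimate with exponent $\beta$ is equivalent to a Schatten-norm bound $\|\overline W\,AA^*\,W\|_{\mathfrak S^{\beta'}}\lesssim\|W\|_Z^2$ over dual weights $W$, where $\beta'>2$. The kernel of $AA^*$ is that of $e^{i(t-s)\partial_x^2}(-\partial_x^2)^{-1/2}$, i.e.\ the one-dimensional free Schr\"odinger kernel (of size $\lesssim|t-s|^{-1/2}$) composed with the logarithmic Riesz potential $(-\partial_x^2)^{-1/2}$, so the Hilbert--Schmidt case $\beta'=2$ is borderline: the defining integral has a non-integrable (logarithmic) singularity. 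This is exactly why $\beta=2$ genuinely fails, and why the estimate must be formulated with $L^{2,\infty}_x$ in place of $L^2_x$. For $\beta'>2$ there is room to spare: I would interpolate the borderline $\mathfrak S^2$ bound against the trivial operator-norm ($\mathfrak S^\infty$) bound, then real-interpolate the resulting family (using the weak-$L^2_x$ endpoint and the trivial case $\beta=1$, which for \eqref{e:max} is just \eqref{e:maxi} and the triangle inequality) to obtain \eqref{e:max} for every $\beta<2$. This Schatten estimate is the main obstacle --- extracting the sharp behaviour exactly at the Hilbert--Schmidt exponent and arranging matters so that the logarithmic loss is absorbed precisely by the passage to weak-$L^2$ in space; the dyadic bookkeeping in the reduction (summing the non-orthonormal frequency-localised pieces against an $\ell^\beta$ norm without leaving the endpoint) is a secondary technical difficulty.

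For sharpness I would test \eqref{e:max} on dilated, spatially translated wave packets. Fix a nonzero smooth compactly supported $g$ with $\|g\|_{\dot H^{1/4}}=1$, and for $N\sim 2^k$ put $f_j(x)=2^{k/4}g(2^k(x-b_j))$, $j=1,\dots,N$, with the $b_j$ spaced $\sim 2^{-k}$ inside a fixed interval so that the $f_j$ have pairwise disjoint supports; then $(f_j)_j$ is orthonormal in $\dot H^{1/4}$, and $\sup_t|e^{it\partial_x^2}f_j(x)|=2^{k/4}V(2^k(x-b_j))$ where $V(y):=\sup_t|e^{it\partial_x^2}g(y)|$. Stationary phase gives $V(y)\gtrsim|y|^{-1/2}$ for $|y|\gtrsim1$, so summing these tails over the $\sim 2^k$ translates yields $\sum_{j=1}^N\sup_t|e^{it\partial_x^2}f_j(x)|^2\gtrsim 2^{k/2}\log(2^k)\sim N^{1/2}\log N$ uniformly for $x$ in a fixed interval, whence $\|\sum_{j=1}^N\sup_t|e^{it\partial_x^2}f_j|^2\|_{L^{2,\infty}_xL^\infty_t}\gtrsim N^{1/2}\log N$. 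Since $\|\mathbf 1_{\{1,\dots,N\}}\|_{\ell^2}\sim\|\mathbf 1_{\{1,\dots,N\}}\|_{\ell^{2,1}}\sim N^{1/2}$, letting $N\to\infty$ shows \eqref{e:max} fails at $\beta=2$ --- hence for every $\beta\ge2$ --- and that even the restricted weak-type estimate with $\|\nu\|_{\ell^{2,1}}$ fails; note that $N^{1/2}\log N\lesssim N^{1/\beta}$ for every $\beta<2$, consistent with the positive part.
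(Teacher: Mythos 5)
Your proposal has genuine gaps in both directions. The most serious is in the sharpness argument: you bound $\sum_j\sup_t|e^{it\partial_x^2}f_j(x)|^2$ from below, but the quantity in \eqref{e:max} is $\sup_t\sum_j\nu_j|e^{it\partial_x^2}f_j(x)|^2$, and $\sup_t\sum_j\le\sum_j\sup_t$, so your lower bound says nothing about the left-hand side. Indeed, for your translated bumps the tail $|e^{it\partial_x^2}f_j(x)|\gtrsim 2^{k/4}|2^k(x-b_j)|^{-1/2}$ is only attained at a $j$-dependent time $t\sim 2^{-k}(x-b_j)$; at any \emph{fixed} time $t$ the dispersive estimate shows the sum over $j$ is $O(2^{k/2})=O(N^{1/2})$, with no logarithm, so the example is consistent with $\beta=2$ and yields no contradiction. (The family is also not orthonormal in $\dot H^{1/4}$: disjoint supports give $L^2$-orthogonality, not $\langle|\partial_x|^{1/4}f_j,|\partial_x|^{1/4}f_{j'}\rangle=0$; this is fixable, but the sup/sum interchange is not.) The real obstruction one must exhibit is a configuration in which, for each $x$, \emph{many} packets pile up at $x$ at a \emph{single} time $t(x)$; the paper achieves this by a semi-classical limit to the velocity average $\int f(x-tv,v)\,|v|^{-1/2}\mathrm{d}v$ of the kinetic transport equation and then testing on a neighbourhood of a measure-zero Nikodym set, which is what kills even the restricted weak-type $\ell^{2,1}$ bound.

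The positive direction also has two problems. First, your reduction via $\sup_t|F|^2\le 2\|F\|_{L^2_t}\|\partial_tF\|_{L^2_t}$ cannot run globally in time: by the Kato smoothing identity, $\|e^{it\partial_x^2}f(x)\|_{L^2_t(\mathbb{R})}=c\||\partial_x|^{-1/2}f\|_{L^2}$ is \emph{constant in $x$}, so the factor you want to place in $L^{4,\infty}_x$ is not in any $L^{p}_x$ with $p<\infty$. The paper instead uses the Kenig--Ponce--Vega change of variables \eqref{e:change}, which converts the maximal-in-time problem for $U_2$ into a maximal-in-space Strichartz estimate for $U_{1/2}|\partial_t|^{-3/8}$ (and this transformation destroys orthonormality, requiring the symmetrization Lemma \ref{l:symmetric} to restore it). Second, and more fundamentally, your plan for the dual Schatten bound --- ``interpolate the borderline $\mathcal{C}^2$ bound against the trivial operator-norm bound'' --- is exactly the naive argument that fails: the $\mathcal{C}^2$ bound is \emph{false} (the $|t-t'|^{-1}$ kernel is non-integrable, as the paper uses to prove necessity of $\beta<2$ in Theorem \ref{t:main}), and you cannot interpolate from a false endpoint; this is precisely why the $(q,r)=(4,\infty)$ endpoint was left open by Frank--Sabin. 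The actual mechanism (Proposition \ref{p:abstract}) is a dyadic decomposition of $SS^*$ in time into pieces $T_l$, a $\mathcal{C}^2$ bound \emph{and} a $\mathcal{C}^4$ bound for each piece (the latter via an explicit $4$-linear form computation), and a bilinear real interpolation in the Lorentz scale to sum over $l$ --- which is also the source of the $L^{4,2}_t$ dual norm and hence of the weak-type conclusion. None of this machinery appears in your outline, so the core of the positive direction is missing.
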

In the above statement, $L^{2,\infty}$ denotes weak $L^2$ and $\ell^{2,1}$ is a (sequence) Lorentz space; the reader may consult, for example, \cite{SteinWeiss} for further details.

As one would expect, a maximal estimate of the form \eqref{e:max} implies pointwise convergence of quantities of the form $\sum_j \nu_j |e^{it\partial_x^2} f_j|^2$ whenever the system $(f_j)_j$ is orthonormal in $\dot{H}^\frac14(\mathbb{R})$ and $(\nu_j)_j$ belongs to $\ell^\beta$ with $\beta < 2$. As we now turn to describe, a more natural formulation of such a result is in terms of the density function $\rho_{\gamma(t)}$ of the solution $\gamma(t)$ to the operator-valued Hartree-type equation 
\begin{equation}\label{e:Hartree-free}
\left\{
\begin{array}{ll}
i\partial_t\gamma=[-\partial^2_x,\gamma], \quad (t,x)\in \mathbb{R}^{1+1}, \\
\gamma|_{t=0}=\gamma_0.
\end{array} 
\right.
\end{equation}
Here, $\gamma_0$ is a self-adjoint and bounded operator on $L^2(\mathbb{R})$, $[\cdot,\cdot]$ denotes the commutator, and the solution $\gamma$ is given by $\gamma(t) = e^{-it\partial^2_x} \gamma_0 e^{it\partial^2_x}$. This is the free version of the operator-valued Hartree-type equation 
\begin{equation}\label{e:Hartree-general}
\left\{
\begin{array}{ll}
i\partial_t\gamma=[-\partial^2_x+w\ast\rho_\gamma,\gamma], \quad (t,x)\in \mathbb{R}^{1+1}, \\
\gamma|_{t=0}=\gamma_0
\end{array} 
\right.
\end{equation}
associated with \eqref{e:Hartree_finite}, $\rho_\gamma$ is the so-called the density function, formally defined by $\rho_\gamma(x) = \gamma(x,x)$ where (with the typical abuse of notation) $\gamma(x,y)$ is the integral kernel of $\gamma$. The operator-theoretic viewpoint allows one to rigorously formulate the problem in the case of infinitely many particles (see, for example, the discussion in \cite{LewinSabinWP}).

Here we consider the convergence of the solution $\gamma(t)$ to the initial data $\gamma_0$ in terms of pointwise convergence of the associated density functions 
\begin{equation}\label{e:carleson-density}
\lim_{t\to0} \rho_{\gamma(t)}(x) = \rho_{\gamma_0}(x)\quad {\rm a.e.}\;\;\; x\in\mathbb{R},
\end{equation} 
and we seek as large a class of initial data $\gamma_0$ as possible. The Schatten classes $\mathcal{C}^\beta(\mathcal{H})$, associated with a given Hilbert space $\mathcal{H}$, provide a natural setting in order to quantify progress on this problem thanks to their monotonicity property, $\mathcal{C}^{\beta_1}(\mathcal{H}) \subseteq \mathcal{C}^{\beta_2}(\mathcal{H})$ provided $\beta_1\le \beta_2$ (we refer the reader forward to Section \ref{section:prelims} for the definition of Schatten classes).

As a consequence of Theorem \ref{t:maximal}, we have the following. 
\begin{corollary}\label{c:pointwise}
If $\gamma_0 \in \mathcal{C}^{\beta}(\dot{H}^{\frac14}(\mathbb{R}))$ is self-adjoint with $\beta < 2$, then the density functions $\rho_{\gamma(t)}(x)$ and $\rho_{\gamma_0}(x)$ are well defined and satisfy \eqref{e:carleson-density}. 
\end{corollary}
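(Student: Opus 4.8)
The plan is to obtain the corollary from Theorem~\ref{t:maximal} by the classical route: a global maximal inequality together with a dense subclass on which convergence is elementary. First I would translate the operator-theoretic statement into the language of orthonormal systems. By the definition of the Schatten classes recalled in Section~\ref{section:prelims}, a self-adjoint $\gamma_0\in\mathcal{C}^\beta(\dot{H}^{\frac14}(\mathbb{R}))$ admits a spectral decomposition whose eigenfunctions $(f_j)_j$ form an orthonormal system in $\dot{H}^{\frac14}(\mathbb{R})$ and whose eigenvalue sequence $\nu=(\nu_j)_j$ is real with $\|\nu\|_{\ell^\beta}=\|\gamma_0\|_{\mathcal{C}^\beta(\dot{H}^{\frac14})}$, and correspondingly $\rho_{\gamma(t)}(x)=\sum_j\nu_j|e^{it\partial_x^2}f_j(x)|^2$ (Theorem~\ref{t:maximal} being unaffected by the reflection $t\mapsto-t$ implicit in $\gamma(t)=e^{-it\partial_x^2}\gamma_0 e^{it\partial_x^2}$). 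Writing $F_t(x):=\sum_j\nu_j|e^{it\partial_x^2}f_j(x)|^2$, the corollary amounts to the two assertions that $F_t(x)$ is well defined for a.e.\ $x$ and every $t$, and that $F_t(x)\to F_0(x)$ for a.e.\ $x$ as $t\to0$.

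For well-definedness I would first reduce to $\gamma_0\ge0$ by splitting off the positive and negative spectral parts, so that $\nu_j\ge0$. Then the partial sums $\sum_{j\le N}\nu_j|e^{it\partial_x^2}f_j(x)|^2$ are nondecreasing in $N$, and Theorem~\ref{t:maximal} applied to the orthonormal subsystem $(f_j)_{j\le N}$ bounds them in $L^{2,\infty}_xL^\infty_t(\mathbb{R}^{1+1})$ by $C\|\nu\|_{\ell^\beta}$, uniformly in $N$. By monotone convergence they converge pointwise a.e.\ for every $t$, which defines $F_t(x)$, and the same bound yields $\|\sup_t|F_t|\|_{L^{2,\infty}_x}\le C\|\nu\|_{\ell^\beta}$ — the global maximal estimate that powers the convergence argument.

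For the convergence, fix $\lambda>0$ and set $\mathcal{O}F(x):=\limsup_{t\to0}F_t(x)-\liminf_{t\to0}F_t(x)$; it suffices to show $|\{x:\mathcal{O}F(x)>\lambda\}|=0$. Given $\eta>0$, I would approximate $\gamma_0$ to within $\eta$ in $\mathcal{C}^\beta(\dot{H}^{\frac14})$ by a finite-rank self-adjoint $\tilde\gamma_0$ whose eigenfunctions are Schwartz — possible since finite-rank operators are dense (as $\beta<\infty$) and the Schwartz class is dense in $\dot{H}^{\frac14}(\mathbb{R})$. For Schwartz $g$ one has $e^{it\partial_x^2}g\to g$ uniformly as $t\to0$ (dominated convergence on the Fourier side), so $\rho_{\tilde\gamma(t)}\to\rho_{\tilde\gamma_0}$ uniformly and the oscillation of $t\mapsto\rho_{\tilde\gamma(t)}$ vanishes identically in $x$. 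Diagonalizing the self-adjoint difference $\delta_0:=\gamma_0-\tilde\gamma_0=\sum_j\mu_j\langle h_j,\cdot\rangle h_j$, with $(h_j)_j$ orthonormal in $\dot{H}^{\frac14}$ and $\|\mu\|_{\ell^\beta}<\eta$, subadditivity of the oscillation gives $\mathcal{O}F(x)\le 2\sup_t|\rho_{\delta(t)}(x)|\le 2\sup_t\sum_j|\mu_j|\,|e^{it\partial_x^2}h_j(x)|^2$, so Theorem~\ref{t:maximal} and Chebyshev's inequality yield
\[
\big|\{x:\mathcal{O}F(x)>\lambda\}\big|\;\le\;\Big(\frac{2C\|\mu\|_{\ell^\beta}}{\lambda}\Big)^{2}\;\le\;\Big(\frac{2C\eta}{\lambda}\Big)^{2}.
\]
Letting $\eta\to0$ forces $\mathcal{O}F=0$ a.e., so $F^{\ast}(x):=\lim_{t\to0}F_t(x)$ exists a.e.; running the same estimate on $|F^{\ast}-F_0|\le|F^{\ast}-\rho_{\tilde\gamma_0}|+|\rho_{\tilde\gamma_0}-F_0|\le 2\sup_t|\rho_{\delta(t)}(x)|$ identifies $F^{\ast}=F_0$ a.e., which is \eqref{e:carleson-density}.

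I do not expect a genuine obstacle in this deduction: all the difficulty is concentrated in Theorem~\ref{t:maximal}. The two points meriting care are, first, that the maximal estimate is only of weak type, so $L^{2,\infty}$ must be handled through its quasi-norm — harmless, since that quasi-norm is equivalent to a norm and only monotonicity and Chebyshev's inequality are invoked — and second, that the approximant $\tilde\gamma_0$ must be re-diagonalized, via the spectral theorem for the self-adjoint operator $\delta_0$, before Theorem~\ref{t:maximal} can be applied to the difference, since that theorem is stated for honestly orthonormal systems.
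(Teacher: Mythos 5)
Your argument is correct and follows the same skeleton as the paper's proof (finite-rank approximation plus the maximal estimate of Theorem~\ref{t:maximal} plus convergence on a dense class), but two of its components differ from the paper in ways worth recording. For well-definedness of the densities, you split $\gamma_0$ into positive and negative parts and use monotone convergence together with the uniform $L^{2,\infty}_xL^\infty_t$ bound on partial sums; the paper instead invokes Lieb's orthonormal Sobolev inequality \eqref{e:lieb-sobolev} to show that $(\rho_{\gamma_0^N})_N$ is Cauchy in $L^2(\mathbb{R})$, which yields the stronger conclusion $\rho_{\gamma_0}\in L^2$ and avoids the positive/negative decomposition, at the cost of importing an external inequality. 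For the dense class, you approximate by finite-rank operators with Schwartz eigenfunctions and use uniform convergence $e^{it\partial_x^2}g\to g$; the paper keeps the truncations $\gamma_0^N$ of the original spectral decomposition and applies Carleson's theorem to each $g_j\in\dot H^{1/4}$, which is more economical (no re-approximation of eigenfunctions and no re-diagonalization of a difference operator is needed, since $\gamma_0-\gamma_0^N$ is already diagonal in the system $(g_j)_j$). One point you should make explicit in your route: when you write $\rho_{\gamma(t)}=\rho_{\tilde\gamma(t)}+\rho_{\delta(t)}$ with $\delta_0$ diagonalized in a \emph{different} orthonormal system $(h_j)_j$, you are using that the density of an infinite-rank operator, defined as a limit of finite-rank densities, is independent of the chosen spectral decomposition and additive; this is true (test against $W\in C_c^\infty$ and use $\int W\rho_A=\mathrm{Tr}(WA)$, all densities being locally integrable by the maximal estimate), but it does not follow merely from the formula for finite-rank kernels and deserves a line of justification. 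The paper's choice to truncate the fixed decomposition of $\gamma_0$ sidesteps this issue entirely.
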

Formal considerations indicate that if $\gamma_0 \in \mathcal{C}^{\beta}(\dot{H}^{\frac14}(\mathbb{R}))$, then
\[
\rho_{\gamma(t)} (x) = \sum_j \nu_j |e^{it\partial_x^2} f_j(x)|^2
\]
for an appropriate system $(f_j)_j$ of orthonormal functions in $\dot{H}^\frac14(\mathbb{R})$ and coefficients $(\nu_j)_j$ in $\ell^\beta$. In the infinite-rank case, some care is required to ensure that this is the case (and in what precise sense); we postpone such discussion to Section \ref{section:maximal}. The role of the maximal estimate in Theorem \ref{t:maximal} is to allow us to deduce pointwise convergence in the infinite-rank case from the finite-rank case. The statement that \eqref{e:carleson-density} holds for $\gamma_0 \in \mathcal{C}^1(\dot{H}^\frac14(\mathbb{R}))$ is equivalent to the following: 
\begin{equation}\label{e:abPW}
	\lim_{t\to0} |e^{it\partial_x^2} f(x)| = |f(x)| \quad {\rm a.e.}\;\;\; x\in \mathbb{R},  \quad  \forall  f\in \dot{H}^\frac14(\mathbb{R}).
\end{equation}
 Hence, recalling $\mathcal{C}^1(\dot{H}^{\frac14}(\mathbb{R})) \subset \mathcal{C}^\beta(\dot{H}^{\frac14}(\mathbb{R}))$ for $\beta>1$, the pointwise convergence \eqref{e:carleson-density} with some $\beta>1$ can be seen as a significant improvement of \eqref{e:abPW}.

\subsection{Strichartz estimates for orthonormal systems of data}
Next, we introduce our second main result in this paper concerning Strichartz estimates for the Schr\"odinger equation for orthonormal systems of initial data. The result is of interest for two reasons; firstly, it addresses a problem left open in recent work of Frank \emph{et al} \cite{FLLS} and Frank--Sabin \cite{frank-sabin-1,frank-sabin-2}, and secondly it provides a path to proving our maximal estimates in Theorem \ref{t:maximal}. We now seek to clarify these comments.

The classical Strichartz estimates for the one-dimensional free Schr\"{o}dinger propagator $e^{it\partial^2_x}$ state that for all $2\le q,r \le \infty $ satisfying $ \frac 2q + \frac 1r = \frac 12 $ (we say $(q,r)$ is an admissible pair in that case), the estimate
\begin{equation}\label{e:ClassicalStr}
\| e^{it\partial^2_x} f \|_{L^q_tL^r_x(\mathbb{R}^{1+1})} \le  C\|f\|_{L^2(\mathbb{R})}
\end{equation}
holds true for $f\in L^2(\mathbb{R})$; see, for example, \cite{GinibreVelo,KeelTao,Strichartz}. Recently, this classical setting was significantly generalized to estimates of the form
\begin{equation}\label{e:ONS}
\bigg\|  \sum_j \nu_j |e^{it\partial^2_x} f_j|^2 \bigg\|_{L^{q/2}_tL^{r/2}_x(\mathbb{R}^{1+1})} \le C \| \nu \|_{\ell^\beta}
\end{equation}
for systems of orthonormal functions $(f_j)_j$ in $L^2(\mathbb{R})$ and $\nu=(\nu_j)_j$ in $\ell^\beta$. For admissible pairs with \emph{finite} values of $r$, the optimal value of $\beta$ has been determined as follows.
\begin{theorem}[\cite{FLLS,frank-sabin-1}]\label{t:ONS-known} 
Let $q,r\ge2$ satisfy 
$$
\frac2q + \frac 1r = \frac 12,\;\;\; 2\le r < \infty.
$$ 
Then \eqref{e:ONS} holds if and only if $\beta \le \frac {2r}{r+2}$.
\end{theorem}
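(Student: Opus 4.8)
The plan is to prove the two directions separately; by the monotonicity $\ell^{\beta_1}\subset\ell^{\beta_2}$ for $\beta_1\le\beta_2$ it suffices, for the positive direction, to treat the endpoint $\beta=\frac{2r}{r+2}=\frac{q}{q-2}$. For \textbf{necessity}, I would test \eqref{e:ONS} against a family of modulated bumps. Fix a Schwartz function $\chi$ with $\|\chi\|_{L^2}=1$ and $|\chi|\ge c_0$ on $[-1,1]$, a large constant $L$, and for $N\in\mathbb N$ set $f_j(x)=e^{iLjx}\chi(x)$, $1\le j\le N$. Since $\langle f_j,f_k\rangle=\widehat{|\chi|^2}\bigl(L(k-j)\bigr)$ and $\widehat{|\chi|^2}$ is Schwartz, the Gram matrix is $I+O(L^{-M})$, so Gram--Schmidt produces an orthonormal system $(\tilde f_j)_j$ with $\|\tilde f_j-f_j\|_{L^2}$ arbitrarily small. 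By Galilean covariance, $e^{it\partial_x^2}(e^{i\xi_0(\cdot)}g)(x)=e^{i(x\xi_0-t\xi_0^2)}(e^{it\partial_x^2}g)(x-2t\xi_0)$, so $|e^{it\partial_x^2}f_j(x)|=|(e^{it\partial_x^2}\chi)(x-2tLj)|$; on the time interval $|t|\le\delta:=(LN)^{-1}$ all shifts $2tLj$ are $O(1)$ and $e^{it\partial_x^2}\chi\to\chi$ uniformly, whence $\sum_j|e^{it\partial_x^2}f_j(x)|^2\gtrsim N$ on $[-\delta,\delta]\times[-1,1]$, and the same lower bound survives for $(\tilde f_j)_j$. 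Taking the $L^{q/2}_tL^{r/2}_x$ norm bounds the left side of \eqref{e:ONS} (with $\nu\equiv1$) from below by $\gtrsim\delta^{2/q}N\sim_L N^{1-2/q}$, while the right side is $CN^{1/\beta}$; letting $N\to\infty$ forces $1-\tfrac2q\le\tfrac1\beta$, i.e. $\beta\le\tfrac{q}{q-2}=\tfrac{2r}{r+2}$.

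For \textbf{sufficiency}, write $U=e^{it\partial_x^2}$, viewed as a map from $L^2_x(\mathbb R)$ into space-time functions. By the duality principle for orthonormal systems of Frank--Sabin \cite{frank-sabin-1}, the estimate \eqref{e:ONS} for all orthonormal systems, with exponent $\beta$, is equivalent to the Schatten bound
\[
\|U^*WU\|_{\mathcal C^{\beta'}(L^2_x)}\le C\,\|W\|_{L^{(q/2)'}_tL^{(r/2)'}_x},\qquad U^*WU\,g=\int_{\mathbb R}e^{-it\partial_x^2}\bigl(W(t,\cdot)\,e^{it\partial_x^2}g\bigr)\,dt,
\]
for all real-valued $W$ on $\mathbb R^{1+1}$; and since $U^*WU=(|W|^{1/2}U)^*\operatorname{sgn}(W)(|W|^{1/2}U)$, it is enough to bound $\||W|^{1/2}U\|_{\mathcal C^{2\beta'}}$, so one may take $W\ge0$. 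At the sharp exponent $\beta'=q/2$, and we prove the displayed bound for every admissible pair with $4<q\le\infty$ (equivalently $2\le r<\infty$) by complex interpolation, with $q$ as the parameter. At $q=\infty$ ($r=2$, $\beta'=\infty$) the bound is immediate from the unitarity of $e^{it\partial_x^2}$ on $L^2_x$: $|\langle U^*WUg,h\rangle|\le\int\|W(t,\cdot)\|_{L^\infty_x}\|e^{it\partial_x^2}g\|_{L^2}\|e^{it\partial_x^2}h\|_{L^2}\,dt=\|W\|_{L^1_tL^\infty_x}\|g\|_{L^2}\|h\|_{L^2}$, so $\|U^*WU\|_{\mathcal C^\infty}\le\|W\|_{L^1_tL^\infty_x}$ — this is just classical Strichartz \eqref{e:ClassicalStr} plus the triangle inequality.

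The opposite endpoint is a Hilbert--Schmidt computation: inserting the fundamental solution $e^{it\partial_x^2}(x,y)=c|t|^{-1/2}e^{i|x-y|^2/4t}$ and integrating out two spatial variables, the oscillatory phases cancel exactly and one finds $\|U^*WU\|_{\mathcal C^2}^2=c\iint\tfrac{h(t)\overline{h(t')}}{|t-t'|}\,dt\,dt'$ with $h(t)=\int_{\mathbb R}W(t,x)\,dx$. The kernel $|t-t'|^{-1}$ is exactly the borderline (failing) exponent for Hardy--Littlewood--Sobolev — which is precisely why $r=\infty$ is excluded — so this endpoint cannot be used verbatim. Instead one embeds $U^*WU$ into a Stein-type analytic family in which the time-homogeneity $|t|^{-1/2}$ of the Schrödinger kernel is deformed by a Riesz weight $|t|^{z}$; for $\operatorname{Re}z$ just below the endpoint this gives a genuine $\mathcal C^2$-type bound with an admissible time exponent, and Stein's analytic interpolation theorem for Schatten-valued families, applied between this off-endpoint bound and the trivial $\mathcal C^\infty$ bound, yields $\|U^*WU\|_{\mathcal C^{q/2}}\lesssim\|W\|_{L^{(q/2)'}_tL^{(r/2)'}_x}$ for all $4<q<\infty$ — by the reduction above this is \eqref{e:ONS} with $\beta=\tfrac{2r}{r+2}$. \textbf{The main obstacle} is exactly this lower-endpoint analytic interpolation: one must choose the analytic family so that the deformed dispersive estimate quantifies the oscillation of the Schrödinger kernel that survives the phase cancellation in the Hilbert--Schmidt identity, and so that interpolating against the $\mathcal C^\infty$ bound lands on precisely the dual pair $\bigl((q/2)',(r/2)'\bigr)$ for every $r<\infty$; the duality principle, the $W\ge0$ reduction, and the $\mathcal C^\infty$ endpoint are all soft by comparison.
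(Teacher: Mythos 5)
The paper states Theorem~\ref{t:ONS-known} purely as a citation to \cite{FLLS,frank-sabin-1} and gives no proof, so there is nothing internal to compare you against. That said, your outline does reproduce the skeleton of the Frank--Lewin--Lieb--Seiringer argument: duality to a Schatten bound of the type in Proposition~\ref{p:duality}, the trivial $\mathcal C^\infty$ endpoint at $q=\infty$ (classical Strichartz plus triangle inequality), the observation that the Hilbert--Schmidt $\mathcal C^2$ endpoint is borderline for Hardy--Littlewood--Sobolev (which is exactly why $r=\infty$ falls outside the strong-type range and is the subject of the present paper), and Stein's analytic interpolation with a Riesz-weight-deformed family to cover $4<q<\infty$. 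Your Galilean-boost construction for the necessity direction is likewise the standard coherent-state example.

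A few points would need to be firmed up. In the $\mathcal C^2$ computation it is cleaner to work with $WSS^*\overline W$ (as in Proposition~\ref{p:duality}): then $\|WSS^*\overline W\|_{\mathcal C^2}^2 = c\iint g(t)g(t')|t-t'|^{-1}\,\mathrm{d}t\,\mathrm{d}t'$ with $g(t)=\|W(t,\cdot)\|_{L^2_x}^2$, rather than $h(t)=\int W(t,x)\,\mathrm{d}x$; this distinction matters for landing on the correct dual space $L^{(q/2)'}_tL^{(r/2)'}_x$ after interpolation. In the necessity example, the Gram--Schmidt step is not as innocent as it looks: $L^2$-closeness $\|\tilde f_j-f_j\|_{L^2}\lesssim L^{-M}$ does not immediately give pointwise lower bounds on $|e^{it\partial_x^2}\tilde f_j|$, and to control the accumulated error against the main term one is forced to take $L$ growing with $N$ (say $L=N^\gamma$) and then pass to a limit in $\gamma$, which still recovers $\beta\le\frac{q}{q-2}$ but is fiddlier than you indicate. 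The cleaner fix is to put $\widehat{f_j}$ on pairwise disjoint unit frequency intervals near $Lj$: this gives exact orthonormality and the same Galilean translation structure for the moduli, so no error analysis is needed. Finally, as you acknowledge, the real content is the choice of analytic family so that the deformed $\mathcal C^2$ endpoint sees genuine time decay and the interpolation lands on $\bigl((q/2)',(r/2)'\bigr)$; this is precisely where \cite{FLLS} does the technical work, and your sketch correctly identifies the obstacle without resolving it.
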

One can notice that the endpoint case $(q,r) = (4,\infty)$ is missing in Theorem \ref{t:ONS-known}. It was observed in \cite{FLLS} that $\beta < 2$ is necessary at the endpoint. On the other hand, as far as the authors are aware, there are no non-trivial positive results at the endpoint which improve upon the trivial case $\beta = 1$ (which follows from \eqref{e:ClassicalStr} and the triangle inequality). Moreover, it was conjectured by Frank and Sabin \cite{frank-sabin-2} that the desired endpoint estimate
\begin{equation}\label{e:ONS-1d}
\bigg\|  \sum_j \nu_j |e^{it\partial_x^2} f_j|^2   \bigg\|_{L^{2}_tL^\infty_x(\mathbb{R}^{1+1})} \le C \| \nu \|_{\ell^\beta}
\end{equation} 
should hold for some $\beta \in (1,2)$.
In a similar spirit to Theorem \ref{t:maximal}, we obtain the optimal range of $\beta$ for a weak-type version of \eqref{e:ONS-1d}.
\begin{theorem}\label{t:main}
The estimate
\begin{equation}\label{e:wONS-1d}
\bigg\|  \sum_j \nu_j |e^{it\partial_x^2} f_j|^2   \bigg\|_{L^{2,\infty}_tL^\infty_x(\mathbb{R}^{1+1})} \le C \| \nu \|_{\ell^\beta}
\end{equation}
holds for all systems of orthonormal functions $(f_j)_j$ in $L^2(\mathbb{R})$ and $\nu = (\nu_j)_j$ in $\ell^\beta$ if and only if $\beta < 2$.
\end{theorem}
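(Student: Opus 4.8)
The plan is to recast the endpoint estimate \eqref{e:wONS-1d} as a Schatten-class bound by means of the duality principle of Frank and Sabin, and then to establish that bound through a dyadic decomposition of the Schr\"odinger propagator in the time-separation variable. Write $Uf(t,x)=e^{it\partial_x^2}f(x)$, so that $UU^*$ is the operator on $L^2(\mathbb{R}^{1+1})$ with kernel $c\,(t-t')^{-1/2}e^{i(x-x')^2/4(t-t')}$, and use the elementary identity $\|h^2\|_{L^{2,1}_tL^1_x}=\|h\|_{L^{4,2}_tL^2_x}^2$ (valid for $h\ge0$). For orthonormal $(f_j)_j$ in $L^2$ and $V=|W|^2$ one has $\langle\sum_j\nu_j|Uf_j|^2,V\rangle=\sum_j\nu_j\langle f_j,U^*VUf_j\rangle$, and taking suprema over $\nu$ in the unit ball of $\ell^\beta$ and over all orthonormal systems shows that, for $\beta\in(1,2)$, the estimate \eqref{e:wONS-1d} is equivalent to
\begin{equation}\label{e:dualmain}
\big\|\,W\,UU^*\,W\,\big\|_{\mathcal{C}^{\beta'}}\le C\,\|W\|_{L^{4,2}_tL^2_x(\mathbb{R}^{1+1})}^{2},\qquad \tfrac1\beta+\tfrac1{\beta'}=1,
\end{equation}
for all weights $W$. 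Here the weak space $L^{2,\infty}_t$ on the primal side is exactly what brings in the Lorentz space $L^{4,2}_t$, and the admissibility relation $\tfrac24+0=\tfrac12$ of the pair $(4,\infty)$ is what allows the exponents in \eqref{e:dualmain} to close up. Since $\beta<2$ if and only if $\beta'>2$, it remains to show that \eqref{e:dualmain} holds for every $\beta'>2$ and fails for $\beta'=2$.

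\emph{Necessity of $\beta<2$.} By the equivalence just described it suffices to produce a weight for which the Hilbert--Schmidt case $\beta'=2$ of \eqref{e:dualmain} fails. For $W(t,x)=\mathbf 1_{[0,1]}(t)\,\psi(x)$ with $0\ne\psi\in L^2(\mathbb{R})$,
\[
\big\|\,W\,UU^*\,W\,\big\|_{\mathcal{C}^2}^{2}=c\,\|\psi\|_{L^2}^{4}\iint_{[0,1]^2}\frac{dt\,dt'}{|t-t'|}=+\infty ,
\]
while $\|W\|_{L^{4,2}_tL^2_x}^{2}=c\,\|\psi\|_{L^2}^{2}<\infty$; the divergence is logarithmic and is created by the diagonal $t=t'$, that is, by small time separations. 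Decomposing the positive operator $U^*|W|^2U=(WU)^*(WU)$ into eigenprojections and truncating to finite rank $N\to\infty$ turns this into an explicit family of orthonormal systems $(f_j)_{j\le N}$ and coefficients in $\ell^2$ along which the left side of \eqref{e:wONS-1d} is unbounded; since the estimate only becomes stronger when $\beta$ increases, it fails for every $\beta\ge2$.

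\emph{Sufficiency.} Fix $\beta'>2$ and split $UU^*=\sum_{k\in\mathbb{Z}}S_k$, where $S_k$ has kernel $c\,(t-t')^{-1/2}e^{i(x-x')^2/4(t-t')}\,\mathbf 1_{\{|t-t'|\sim2^k\}}$; the parabolic scaling $(t,x)\mapsto(2^kt,2^{k/2}x)$ conjugates $S_k$ to $2^{-k/2}$ times a fixed model operator, so bounds for $WS_kW$ in mixed-norm spaces are dictated by scaling. Two ingredients drive the proof. First, a Hilbert--Schmidt bound $\|WS_kW\|_{\mathcal{C}^2}\lesssim\|W\|_{L^4_tL^2_x}^2$, which is scale-critical and hence \emph{uniform} in $k$; it is read off directly from the kernel together with Young's inequality in $t$. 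Second, an operator-norm bound that \emph{gains} a positive power of the scale, $\|WS_kW\|_{\mathcal{C}^\infty}\lesssim 2^{-\varepsilon|k|}\,\|W\|_{X_k}^2$ for some $\varepsilon>0$ and suitable mixed-norm spaces $X_k$; this comes from the $TT^*$ form of Strichartz-type inequalities at pairs $(q,r)$ slightly off the admissible line $\tfrac2q+\tfrac1r=\tfrac12$ — one side of the line producing a gain as $k\to+\infty$, the other as $k\to-\infty$ — and uses both the $L^2_x$ unitarity and the $|\tau|^{-1/2}$ dispersive decay of $e^{i\tau\partial_x^2}$. A Stein-type complex interpolation in the Schatten scale between these two bounds, with the weight lifted to an analytic family so that the boundary estimates are both expressed through $\|W\|_{L^{4,2}_tL^2_x}$ (this is where the Lorentz structure enters, via $L^{4,2}_t=(L^2_t,L^\infty_t)_{1/2,2}$), then yields $\|WS_kW\|_{\mathcal{C}^{\beta'}}\lesssim 2^{-\varepsilon(1-2/\beta')|k|}\,\|W\|_{L^{4,2}_tL^2_x}^2$; since $\beta'>2$, the exponent is strictly positive and the geometric series in $k$ sums to \eqref{e:dualmain}. (Note that this endpoint cannot be obtained merely by interpolating the estimates of Theorem~\ref{t:ONS-known}, all of which have a $t$-exponent $q/2\ge2$: the gain must be manufactured at the critical exponent itself.)

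The hardest part, I expect, is the regime of small time separations $k\to-\infty$: it is exactly there that the $\beta'=2$ bound breaks down (because $|t-t'|^{-1}$ is not integrable across the diagonal), so no estimate that is simultaneously $\mathcal{C}^2$-valued and scale-invariant can be summed over those scales, and the argument must genuinely convert the strict inequality $\beta'>2$ into quantitative decay — which forces one to keep track of the full $x$-dispersion of $e^{i\tau\partial_x^2}$, not just its $L^2_x$ isometry, and to balance it carefully against the Lorentz bookkeeping in $t$. The long-time scales $k\to+\infty$ are, by comparison, comfortably controlled by dispersion. A secondary point needing care is to make the duality step — the identification of the K\"othe dual of $L^{2,\infty}_tL^\infty_x$ and the passage from a failure of \eqref{e:dualmain} to an explicit orthonormal counterexample — completely rigorous in the infinite-rank setting.
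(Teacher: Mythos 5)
Your overall skeleton is the right one and coincides with the paper's: pass to the Schatten bound
\[
\|W\,UU^*\,W\|_{\mathcal{C}^{\beta'}}\lesssim \|W\|_{L^{4,2}_tL^2_x}^2
\]
via the Frank--Sabin duality, decompose the kernel dyadically in $|t-t'|$, prove scale-by-scale Schatten bounds, and interpolate to produce summability; your necessity argument (divergence of the Hilbert--Schmidt norm for a tensor-product $W$) is also exactly what the paper does. The gaps are in how you propose to run the sufficiency interpolation.

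First, your two boundary estimates are $\mathcal{C}^2$ and $\mathcal{C}^\infty$. The $\mathcal{C}^2$ bound is fine and scale-critical as you say. But the claimed operator-norm bound $\|WS_kW\|_{\mathcal{C}^\infty}\lesssim 2^{-\varepsilon|k|}\|W\|_{X_k}^2$, decaying in both directions, is not established and is the hard content of the whole argument; you would need a Schur-type or $TT^*$ mechanism that simultaneously exploits the $L^2_x$ isometry and $|\tau|^{-1/2}$ dispersion inside a mixed-norm framework, and it is far from clear this produces anything usable. The paper sidesteps this by using $\mathcal{C}^4$ as the second endpoint: the identity $\|A\|_{\mathcal{C}^4}^2=\|A^*A\|_{\mathcal{C}^2}$ turns the computation into an explicit $4$-linear form in $h_i(t)=\|W_i(t,\cdot)\|_{L^2_x}^2$, which can then be estimated at several points $(p_1,p_2)$ and interpolated. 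That is what makes the scale-dependent gain $2^{(\frac12-\frac1{p_1}-\frac1{p_2})l}$ both explicit and tunable; nothing of the sort is available for $\mathcal{C}^\infty$.

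Second, the interpolation mechanism you invoke is muddled. You write $L^{4,2}_t=(L^2_t,L^\infty_t)_{1/2,2}$, which is real interpolation, but then describe a ``Stein-type complex interpolation with the weight lifted to an analytic family.'' Complex interpolation between the two boundary bounds would land you in $L^4_t$, not $L^{4,2}_t$, and would not by itself produce the $\ell^1$-in-$l$ summability. What the paper actually does is treat the bilinear map $\mathcal{T}(W_1,W_2)=(W_1T_lW_2)_l$ and exploit the fact that as $(1/p_1,1/p_2)$ ranges over a neighbourhood of $(1/4,1/4)$, one gets a whole family of bounds into $\ell^\infty_{\mu(p_1,p_2)}(\mathcal{C}^{\beta'})$ with weight $\mu(p_1,p_2)=\tfrac1{p_1}+\tfrac1{p_2}-\tfrac12$; a \emph{bilinear real} interpolation (the Bergh--L\"ofstr\"om exercise, in the spirit of Keel--Tao) then upgrades this to $L^{4,2}_t\times L^{4,2}_t\to\ell^1(\mathcal{C}^{\beta'})$. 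The $\ell^1$ summability and the Lorentz refinement are produced by the same real-interpolation step, and both are genuinely needed. Complex interpolation does not give you the $\ell^1$ in $l$.

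Third, your framing of the small-time regime $k\to-\infty$ as ``the hardest part'' and the large-time regime as ``comfortably controlled by dispersion'' is inaccurate: by the parabolic scaling you yourself invoke, all the $S_k$ are conjugate up to a power of $2^k$, so the regimes are symmetric. The $\mathcal{C}^2$ estimate fails precisely because $\int_{\mathbb{R}}|\tau|^{-1}d\tau$ diverges at \emph{both} $0$ and $\infty$. The paper handles this symmetry cleanly by first frequency-localizing with $P$ (so the kernel is bounded and only the scales $l\ge1$ need the decay mechanism, with $T_0$ handled trivially) and then removing $P$ at the end by scaling invariance of the estimate; you decompose the full propagator over all $k\in\mathbb{Z}$, which can in principle be made to work, but only with the correct interpolation in hand, and the $(p_1,p_2)$ must be taken with $\tfrac1{p_1}+\tfrac1{p_2}>\tfrac12$ for the $k>0$ scales and $<\tfrac12$ for the $k<0$ scales, which is again exactly what the bilinear real interpolation accomplishes by interpolating over a two-sided neighbourhood of $(1/4,1/4)$.

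In short: the architecture (duality, dyadic decomposition, interpolation to Lorentz, necessity via Hilbert--Schmidt divergence) is the paper's, but the two crucial ingredients --- the choice of $\mathcal{C}^4$ (not $\mathcal{C}^\infty$) as the second Schatten endpoint, so that an explicit $4$-linear form can be estimated, and the use of bilinear \emph{real} interpolation \`a la Keel--Tao to convert a family of $\ell^\infty_\mu$-bounds into an $\ell^1$-bound with Lorentz norms on the data --- are missing, and your ``Stein-type complex interpolation'' would not close the argument.
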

We note that we have been able to obtain the strong-type estimate \eqref{e:ONS-1d} in the range $\beta \le \frac43$; see the remarks at the end of Section \ref{section:Strichartz}. We also remark that our proof of Theorem \ref{t:main} is robust enough to permit generalization to, say, fractional Schr\"odinger equations.  We refrain from stating such results here, and refer the reader forward to Section \ref{section:Strichartz}. Such a generalization is key to our proof of the maximal estimates in Theorem \ref{t:maximal} since we employ an idea due to Kenig--Ponce--Vega \cite{KPV} that has the effect of switching the roles and space and time at the cost of replacing the classical Schr\"odinger propagator with the fractional Schr\"odinger propagator of order $1/2$. 

\begin{organisation}
Section \ref{section:prelims} contains some preliminaries and a more detailed overview of our approach to proving our main results. In Section \ref{section:Strichartz} we prove Theorem \ref{t:main}, and in Section \ref{section:maximal} we prove Theorem \ref{t:maximal} and Corollary \ref{c:pointwise}. Finally, we collect some additional remarks in Section \ref{section:remarks}.
\end{organisation}

\section{Preliminaries and overview} \label{section:prelims}

\subsection{Preliminaries}

We begin by recalling the definition of the Schatten spaces. For $\beta\in[1,\infty)$, $\mathcal{C}^\beta(\mathcal{H})$ is the set of all compact operators $A$ on the Hilbert space $\mathcal{H}$ such that $\| A \|_{\mathcal{C}^\beta} = \| (s_j(A))_j \|_{\ell^\beta} <\infty$, where $(s_j(A))_j$ are the singular values of the operator $A$. Although the case $\beta = \infty$ will not arise in the present work, we recall that this is the space of bounded linear operators on $\mathcal{H}$ with the usual operator norm. In fact, most important for us will be the cases $\beta=2$ and $\beta=4$, in which case explicit computations will be available. Indeed, for Hilbert--Schmidt integral operators of the form
\[
Af(x) = \int_{\mathbb{R}^d} K(x,y) f(y) \, \mathrm{d}y
\]
with $K \in L^2(\mathbb{R}^d \times \mathbb{R}^d)$, the $\mathcal{C}^2$ norm is given by $\|A\|_{\mathcal{C}^2} = \|K\|_{L^2(\mathbb{R}^d \times \mathbb{R}^d)}$. For the case $\beta = 4$, we use the fact that $\| A \|_{\mathcal{C}^4}^2 = \| A^* A\|_{\mathcal{C}^2}$.

Schatten spaces arise naturally via duality when studying Strichartz estimates for orthonormal systems of initial data. For example, the following is a special case of the duality principle of Frank--Sabin \cite[Lemma 3]{frank-sabin-1} (strictly speaking, the result in \cite{frank-sabin-1} is stated for pure Lebesgue spaces).
\begin{proposition} \label{p:duality}
Let $S : L^2(\mathbb{R}) \to L^{q,\infty}_tL^{r}_x(\mathbb{R}^{1+1})$ be a bounded linear operator for some $q>2$ and $r \geq 2$, and let $\beta \geq 1$. Then 
\begin{equation*}
\bigg\|\sum_j\nu_j |Sf_j|^2\bigg\|_{L^{q/2,\infty}_tL^{r/2}_x(\mathbb{R}^{1+1})} \leq C \|\nu\|_{\ell^{\beta}}
\end{equation*}
holds for all orthonormal systems $(f_j)_j$ in $L^2(\mathbb{R})$ and $\nu=(\nu_j)_j$ in $\ell^{\beta}$, if and only if
\[
\|WSS^*\overline{W}\|_{\mathcal{C}^{\beta'}} \leq C \|W\|_{L^{q_0,2}_tL^{r_0}_x}^2
\]
holds for all $W \in L^{q_0,2}_tL^{r_0}_x(\mathbb{R}^{1+1})$, where $\frac{1}{q} + \frac{1}{q_0} = \frac{1}{2}$ and $\frac{1}{r} + \frac{1}{r_0} = \frac{1}{2}$. Here we identify $L^{\infty,\infty}_t$ with $L^\infty_t$.
\end{proposition}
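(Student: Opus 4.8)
The plan is to follow the trace-duality method of Frank--Sabin. The key observation is that, for an orthonormal system $(f_j)_j$ and nonnegative coefficients $\nu=(\nu_j)_j$, the function $\rho:=\sum_j\nu_j|Sf_j|^2$ is the density of the operator $\Gamma:=\sum_j\nu_j|f_j\rangle\langle f_j|$ on $L^2(\mathbb{R})$ in the sense that $\int_{\mathbb{R}^{1+1}}\rho\,|W|^2 = \mathrm{Tr}\big(\Gamma\,(WS)^*(WS)\big)$ for every weight $W$, and that, by the spectral theorem, the operators $\Gamma$ obtained this way with $\|\nu\|_{\ell^\beta}\le 1$ are exactly the nonnegative elements of the unit ball of $\mathcal{C}^\beta(L^2(\mathbb{R}))$. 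Pairing the Schatten duality $\mathcal{C}^\beta$--$\mathcal{C}^{\beta'}$ with the Lorentz-space duality of the mixed norm $L^{q/2,\infty}_tL^{r/2}_x$ then yields the asserted equivalence.

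First I would reduce to $\nu\ge0$: splitting $\nu=\nu^+-\nu^-$ and using the triangle inequality, the general estimate follows from the nonnegative one with constant at most doubled, since $\|\nu^\pm\|_{\ell^\beta}\le\|\nu\|_{\ell^\beta}$. The next step is the function-space bookkeeping, which I expect to be the main (if not deep) obstacle. From $q>2$ we get $q_0=\tfrac{2q}{q-2}<\infty$ together with $\tfrac{q_0}{2}=(\tfrac q2)'$ and $\tfrac{r_0}{2}=(\tfrac r2)'$; combining $\|h^2\|_{L^p}=\|h\|_{L^{2p}}^2$ with the Lorentz rescaling identity $\|h^2\|_{L^{p,1}}\simeq\|h\|_{L^{2p,2}}^2$ gives, for $W\ge0$,
\[
\big\|W^2\big\|_{L^{(q/2)',1}_tL^{(r/2)'}_x}\simeq\|W\|_{L^{q_0,2}_tL^{r_0}_x}^2 ,
\]
so $V\mapsto V^{1/2}$ maps the nonnegative part of the unit ball of $L^{(q/2)',1}_tL^{(r/2)'}_x$ onto that of $L^{q_0,2}_tL^{r_0}_x$, up to constants. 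Since $\rho\ge0$, the standard Lorentz duality for the mixed norm (see, e.g., \cite{SteinWeiss}; with the convention $L^{\infty,\infty}_t=L^\infty_t$ when $q=\infty$) then gives
\[
\big\|\rho\big\|_{L^{q/2,\infty}_tL^{r/2}_x}\simeq\sup\Big\{\int_{\mathbb{R}^{1+1}}\rho\,|W|^2 \;:\; \|W\|_{L^{q_0,2}_tL^{r_0}_x}\le1\Big\}.
\]
A short H\"older computation in Lorentz spaces (here $q>2$ is used) gives $L^{q,\infty}_tL^r_x\cdot L^{q_0,2}_tL^{r_0}_x\hookrightarrow L^2_{t,x}$, so $WS\colon L^2(\mathbb{R})\to L^2_{t,x}(\mathbb{R}^{1+1})$ is bounded and $(WS)^*(WS)=S^*|W|^2S\ge0$ is a well-defined operator on $L^2(\mathbb{R})$; as it has the same nonzero singular values as $WSS^*\overline W=(WS)(WS)^*$, we have $\|(WS)^*(WS)\|_{\mathcal{C}^{\beta'}}=\|WSS^*\overline W\|_{\mathcal{C}^{\beta'}}$.

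With these identities the equivalence follows by chaining suprema. For the nonnegative operator $B:=(WS)^*(WS)$, trace duality gives $\|B\|_{\mathcal{C}^{\beta'}}=\sup\{\mathrm{Tr}(\Gamma B):\Gamma\ge0,\ \|\Gamma\|_{\mathcal{C}^\beta}\le1\}$, while $\mathrm{Tr}(\Gamma B)=\sum_j\nu_j\|WSf_j\|_{L^2_{t,x}}^2=\int\rho\,|W|^2$ for $\Gamma=\sum_j\nu_j|f_j\rangle\langle f_j|$. Hence, assuming the orthonormal-systems estimate, for every $W$ and every such $\Gamma$,
\[
\mathrm{Tr}(\Gamma B)=\int\rho\,|W|^2\le\|\rho\|_{L^{q/2,\infty}_tL^{r/2}_x}\big\|W^2\big\|_{L^{(q/2)',1}_tL^{(r/2)'}_x}\le C\|\nu\|_{\ell^\beta}\|W\|_{L^{q_0,2}_tL^{r_0}_x}^2=C\|\Gamma\|_{\mathcal{C}^\beta}\|W\|_{L^{q_0,2}_tL^{r_0}_x}^2 ,
\]
and taking the supremum over $\Gamma$ in the unit ball of $\mathcal{C}^\beta$ gives $\|WSS^*\overline W\|_{\mathcal{C}^{\beta'}}\le C\|W\|_{L^{q_0,2}_tL^{r_0}_x}^2$. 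Conversely, assuming the Schatten estimate, for any orthonormal $(f_j)_j$ and $\nu\ge0$ with $\Gamma=\sum_j\nu_j|f_j\rangle\langle f_j|$ (so $\|\Gamma\|_{\mathcal{C}^\beta}=\|\nu\|_{\ell^\beta}$),
\[
\|\rho\|_{L^{q/2,\infty}_tL^{r/2}_x}\simeq\sup_{\|W\|_{L^{q_0,2}_tL^{r_0}_x}\le1}\mathrm{Tr}(\Gamma(WS)^*(WS))\le\|\Gamma\|_{\mathcal{C}^\beta}\sup_{\|W\|_{L^{q_0,2}_tL^{r_0}_x}\le1}\|WSS^*\overline W\|_{\mathcal{C}^{\beta'}}\le C\|\nu\|_{\ell^\beta}.
\]
The remaining technical point is the passage between finite and infinite orthonormal systems, handled by diagonalizing the compact operator $\Gamma$ and invoking monotone convergence in the identity $\int\rho\,|W|^2=\mathrm{Tr}(\Gamma(WS)^*(WS))$, valid because $\nu_j\ge0$.
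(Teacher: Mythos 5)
The paper does not prove Proposition \ref{p:duality}; it cites it as ``a special case of the duality principle of Frank--Sabin [Lemma 3]'' with the remark that Frank--Sabin state their version only for pure Lebesgue spaces. Your proof fills in exactly this gap: it reproduces the Frank--Sabin trace-duality argument (identify $\sum_j \nu_j |S f_j|^2$ with the density of $\Gamma = \sum_j \nu_j |f_j\rangle\langle f_j|$, pair against $|W|^2$, and chain the Lorentz-space duality $L^{q/2,\infty}_t L^{r/2}_x \leftrightarrow L^{(q/2)',1}_t L^{(r/2)'}_x$ with Schatten duality $\mathcal{C}^\beta \leftrightarrow \mathcal{C}^{\beta'}$), carrying out the Lorentz-exponent bookkeeping that the paper states is needed but does not perform. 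The argument is correct; in particular, the identities $q_0/2 = (q/2)'$, $\|W^2\|_{L^{q_0/2,1}} = \|W\|_{L^{q_0,2}}^2$, the H\"older step $L^{q,\infty}_t L^r_x \cdot L^{q_0,2}_t L^{r_0}_x \hookrightarrow L^2_{t,x}$ (using $q>2$), the reduction to $\nu\geq 0$, and the equality of singular values of $(WS)^*(WS)$ and $WSS^*\overline{W}$ are all handled properly, and the restriction to nonnegative $\rho$, $V$ is precisely what makes the predual duality for $L^{q/2,\infty}_t L^{r/2}_x$ safe even at the boundary cases $q=\infty$ or $r=\infty$. This is the intended proof.
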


\begin{notation}
In the remainder of the paper, we will use the notation $A \lesssim B$ to mean $A \leq CB$ for an appropriate constant $C$.  We reserve the notation $P$ for the frequency projection operator given by $\widehat{Pf}(\xi) =  \chi(\xi) \widehat{f}(\xi)$, where $\chi \in C^\infty$ is supported on $[-1,1]$ and identically 1 on $[-1/2,1/2]$.

Next, we introduce the propagator $U_\alpha$ for the fractional Schr\"{o}dinger equation of order $\alpha \in \mathbb{R}_{+} \setminus\{1\}$, 
\[
U_{\alpha}f(t,x) = e^{it|\partial_x|^\alpha}f(x) = \frac{1}{2\pi} \int_{\mathbb{R}}e^{i(x \xi+t|\xi|^\alpha)} \widehat{f}(\xi)\, \mathrm{d}\xi
\]
for appropriate functions $f : \mathbb{R} \to \mathbb{C}$. Here,
\[
\widehat{f}(\xi) = \int_{\mathbb{R}} f(x) e^{-i x\xi} \, \mathrm{d}\xi
\]
is the Fourier transform of $f$, and we use the notation $|\partial_x| = (-\partial_x^2)^{1/2}$.
\end{notation}
Although each of our main results stated in the Introduction concern the classical Schr\"odinger propagator $U_2$, as we shall see momentarily, the case $\alpha = 1/2$ will also play an important role.

\subsection{Overview of our proofs}

We shall begin by proving the Strichartz estimates for orthonormal systems contained in Theorem \ref{t:main}. Since the desired estimate \eqref{e:wONS-1d} is scaling invariant, it suffices to prove the analogous estimate with $U_2$ replaced by frequency-localized version $U_2 P$. Thus, in light of Proposition \ref{p:duality}, we will consider estimates on $\| W U_2 P^2 U_2^* \overline{W}\|_{C^{\beta'}}$ with $\beta' > 2$. In order to capitalize on the time decay of the kernel of $U_2 P^2 U_2^*$, we perform an appropriate dyadic decomposition of the operator. By establishing an appropriate range of $\mathcal{C}^2$ and $\mathcal{C}^4$ estimates for each operator arising in this decomposition, we shall we able to obtain the desired estimates on $\| W U_2 P^2 U_2^* \overline{W}\|_{C^{\beta'}}$ via a bilinear interpolation argument inspired by ideas in \cite{KeelTao}.

Next, we shall turn to the proof of Theorem \ref{t:maximal}. Here, we employ a trick due to Kenig--Ponce--Vega \cite{KPV} where they reduced the maximal-in-time estimate \eqref{e:maxi} for $U_2$ to a Strichartz estimate (maximal-in-space estimate) for $U_{1/2}$. More precisely, by an elementary changes of variables, note that
\begin{align}\label{e:change}
2e^{-it\partial_x^2}|\partial_x|^{-\frac14} f(x) & = \frac{1}{2\pi} \int_0^\infty e^{it\eta} |\eta|^{-\frac58} \big( e^{ix|\eta|^{\frac12}}\widehat{f}(|\eta|^{\frac12}) + e^{-ix|\eta|^{\frac12}} \widehat{f}(-|\eta|^{\frac12}) \big)\, \mathrm{d}\eta \\
& =: e^{ix|\partial_t|^{\frac12}}|\partial_t|^{-\frac38} f_+ (t) + e^{-ix|\partial_t|^{\frac12}}|\partial_t|^{-\frac38} f_- (t), \nonumber
\end{align}
where $f_{\pm}$ are given by 
$$
\widehat{f_\pm}(\eta) = |\eta|^{-\frac14} \1_{(0,\infty)}(\eta)\widehat{f}(\pm|\eta|^\frac12).
$$ 
Fortunately, our proof of Theorem \ref{t:main} is sufficiently robust to allow us to obtain, in a straightforward manner, the desired Strichartz estimate for $U_{1/2}$. In fact, we present a somewhat general result in Proposition \ref{p:abstract} which allows us to deduce $L^{4,\infty}_tL^\infty_x$ orthonormal Strichartz estimates for both $U_2$ and $U_{1/2}$. Using the above trick of Kenig--Ponce--Vega we are able to obtain Theorem \ref{t:maximal}; however, an additional step is required to overcome the fact that orthonormal structure is not preserved under the transformation $f \mapsto f_\pm$ (see Lemma \ref{l:symmetric}). 

To show the sharpness of Theorem \ref{t:maximal}, we will employ a semi-classical limit argument and show the failure of the induced estimate by a geometric argument based on the existence of Nikodym sets with zero Lebesgue measure.

\section{Proof of Theorem \ref{t:main}} \label{section:Strichartz}
First, we observe that in order to prove \eqref{e:wONS-1d}, by an elementary rescaling argument, it suffices to prove the frequency localized estimate
\begin{equation} \label{e:U2local}
\bigg\| \sum_{j} \nu_j |U_2P f_j|^2 \bigg\|_{L^{2,\infty}_tL^\infty_x(\mathbb{R}^{1+1})} \lesssim \| \nu\|_{\ell^\beta}.
\end{equation}
Next, we have 
\[
	U_2P^2U_2^*F(t,x) = \int_{\mathbb{R}^{1+1}} K(t-t',x-x') F(t',x')\, \mathrm{d}t'\mathrm{d}x',
\]
for suitable test functions $F:\mathbb{R}^{1+1} \to \mathbb{C}$, where the integral kernel $K$ has the decay property 
$
|K(t,x)| \lesssim (1 + |t|)^{-\frac12}
$
uniformly in $x$. In fact, one can check from a direct computation that 
$$
K(t,x) = \int_{\mathbb{R}} \chi(\xi)^2e^{i(x \xi + t|\xi|^2)}\, \mathrm{d}\xi
$$
and such a decay estimate is a consequence of a stationary phase argument. Our argument for proving \eqref{e:U2local} only uses the above two properties of the operator $U_2$. For this reason, and for use in our forthcoming proof of Theorem \ref{t:maximal}, we consider a more general bounded linear operator $S$ from $L^2(\mathbb{R})$ to $L^{4,\infty}_tL^\infty_x(\mathbb{R}^{1+1})$ such that $SS^*$ is given by
\begin{equation}\label{e:KernelRep}
SS^*F(t,x) = \int_{\mathbb{R}^{1+1}} K(t-t',x-x') F(t',x')\, \mathrm{d}t'\mathrm{d}x'
\end{equation}
on a suitable class of test functions $F:\mathbb{R}^{1+1} \to \mathbb{C}$, where the kernel satisfies
\begin{equation}\label{e:dispersive}
\sup_{x \in \mathbb{R}} |K(t,x)| \lesssim (1 + |t|)^{-\frac12}  \qquad (t \in \mathbb{R}).	
\end{equation}

\begin{proposition}\label{p:abstract}
Suppose $\beta < 2$. Under the assumptions \eqref{e:KernelRep} and \eqref{e:dispersive}, the estimate
$$
\bigg\| \sum_{j} \nu_j |S f_j|^2  \bigg\|_{L^{2,\infty}_tL^\infty_x(\mathbb{R}^{1+1})} \lesssim \| \nu\|_{\ell^\beta} 
$$
holds for all orthonormal systems $(f_j)_j$ in $L^2(\mathbb{R})$ and $\nu=(\nu_j)_j$ in $\ell^\beta$.
\end{proposition}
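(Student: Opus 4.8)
The plan is to use the duality principle (Proposition~\ref{p:duality}) to convert the claimed orthonormal estimate into a Schatten bound on $WSS^*\overline W$. Applied with $q=4$, $r=\infty$ (so $q_0=4$, $r_0=2$), and $\beta'>2$, it suffices to prove
$$
\| W SS^* \overline{W} \|_{\mathcal{C}^{\beta'}} \lesssim \| W \|_{L^{4,2}_tL^2_x}^2
$$
for all $W \in L^{4,2}_tL^2_x(\mathbb{R}^{1+1})$. The operator $WSS^*\overline W$ has integral kernel $W(t,x)\,K(t-t',x-x')\,\overline{W}(t',x')$. First I would perform a dyadic decomposition in the time separation: write $K = \sum_{N} K_N$ where $K_N$ is a smooth truncation of $K$ to $|t|\sim N$ for dyadic $N\ge 1$ (together with a piece near $|t|\lesssim 1$), and correspondingly $WSS^*\overline W = \sum_N T_N$ with $T_N$ having kernel $W(t,x)K_N(t-t',x-x')\overline W(t',x')$. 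The decay hypothesis \eqref{e:dispersive} gives $\sup_x |K_N(t,x)| \lesssim N^{-1/2}$ on the support $|t|\sim N$.

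The heart of the argument is to establish, for each dyadic block $T_N$, endpoint Schatten estimates at the exponents $2$ and $4$, which are the cases where explicit kernel computations are available. For the $\mathcal{C}^2$ (Hilbert--Schmidt) bound, $\|T_N\|_{\mathcal{C}^2}^2 = \iint |W(t,x)|^2 |K_N(t-t',x-x')|^2 |W(t',x')|^2 \,\mathrm{d}t\mathrm{d}x\mathrm{d}t'\mathrm{d}x'$; using $\sup_x|K_N|\lesssim N^{-1/2}$ and the fact that $K_N(t,\cdot)$ is, uniformly in $t$, an $L^1_x$ function with a controlled bound (this is where one may need an additional mild hypothesis on $K$, or to exploit the oscillatory structure — the prototypical $K(t,x)=\int \chi(\xi)^2 e^{i(x\xi+t|\xi|^2)}\,\mathrm{d}\xi$ is Schwartz in $x$ uniformly for bounded $t$, and for large $t$ one gets $\|K_N(t,\cdot)\|_{L^1_x}\lesssim 1$ from stationary phase together with the compact frequency support), one integrates out the $x$, $x'$ variables against $|W|^2$ and is left with a bound of the shape $N^{-1} \| W \|_{L^4_t L^2_x}^4$ after a Young/Hölder step in $t$. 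For the $\mathcal{C}^4$ bound one uses $\|T_N\|_{\mathcal{C}^4}^2 = \|T_N^* T_N\|_{\mathcal{C}^2}$ and computes the kernel of $T_N^*T_N$ explicitly; this produces a gain with a \emph{positive} power of $N$ in general, so that $T_N$ is summable at one endpoint and the other endpoint controls the blow-up. I would then run a bilinear complex-interpolation argument in the spirit of \cite{KeelTao}: interpolating the $\mathcal{C}^2$ and $\mathcal{C}^4$ bounds for $T_N$ between the two endpoints of a bilinear-form estimate, one obtains $\|T_N\|_{\mathcal{C}^{\beta'}} \lesssim N^{-\delta(\beta')} \|W\|_{L^{4,2}_tL^2_x}^2$ with $\delta(\beta')>0$ precisely when $\beta'>2$, i.e.\ $\beta<2$, where the Lorentz refinement $L^{4,2}$ (rather than $L^4$) in the time variable is exactly what the real/bilinear interpolation delivers and is essential for summing the geometric series $\sum_N N^{-\delta(\beta')}$.

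The main obstacle I anticipate is twofold. First, extracting from the bare hypothesis \eqref{e:dispersive} (pointwise decay in $x$, uniform in $t$) enough control to make the $\mathcal{C}^2$ kernel integral converge — one genuinely needs some integrability of $K(t,\cdot)$ in $x$, so either the proposition is applied only to kernels (like the model one) for which this is automatic, or one carries this as an implicit additional structural feature; I would check carefully which minimal assumption is actually used. Second, and more delicate, is organizing the bilinear interpolation so that it outputs a Lorentz space $L^{4,2}_t$ on the right-hand side rather than $L^4_t$: this requires setting up the argument as an interpolation of a bilinear (or multilinear) operator between Lorentz scales à la Keel--Tao, being careful that the Schatten exponent $\beta'$ and the Lorentz exponents move together correctly, and that the endpoint $\mathcal{C}^4$ estimate has a strictly favorable power of $N$ so the summation converges for every $\beta'>2$. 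Once the per-block bound with a negative power of $N$ is in hand, summing in $N$ and feeding the result back through Proposition~\ref{p:duality} closes the proof.
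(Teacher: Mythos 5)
Your overall strategy is the right one and matches the paper's: duality to a Schatten bound, dyadic decomposition of $SS^*$ in the time separation, $\mathcal{C}^2$ and $\mathcal{C}^4$ estimates for each block, and a Keel--Tao-style bilinear real interpolation to land in $L^{4,2}_tL^2_x$. However, the centrepiece of your quantitative analysis is incorrect: the $\mathcal{C}^2$ estimate for a dyadic block $T_l$ (kernel supported where $|t-t'|\sim 2^l$) does \emph{not} decay in $l$. Using $\sup_x|K_l|\lesssim 2^{-l/2}$ gives $|K_l|^2\lesssim 2^{-l}\mathbf{1}_{|t-t'|\sim 2^l}$, and the $t,t'$ integration over the strip $|t-t'|\sim 2^l$ contributes a compensating factor $\sim 2^l$ by Young's inequality, so that
\[
\|W_1 T_l W_2\|_{\mathcal{C}^2} \lesssim \|W_1\|_{L^4_tL^2_x}\|W_2\|_{L^4_tL^2_x}
\]
with constant \emph{uniform} in $l$, not $\lesssim 2^{-l/2}$ as your $N^{-1}\|W\|_{L^4_tL^2_x}^4$ claim asserts. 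If your $\mathcal{C}^2$ bound were correct the whole proof would collapse to a one-line triangle inequality, so this is not a cosmetic slip. Relatedly, your worry about needing $K(t,\cdot)\in L^1_x$ is a red herring: the $\mathcal{C}^2$ and $\mathcal{C}^4$ kernel computations only ever use the pointwise bound $\sup_x|K(t,x)|\lesssim(1+|t|)^{-1/2}$, integrating $|W|^2$ over $x$ to reduce to the purely temporal functions $h_k(t)=\|W_k(t,\cdot)\|_{L^2_x}^2$.

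Because the $\mathcal{C}^2$ endpoint gives no decay, the summability has to come entirely from the structure of the $\mathcal{C}^4$ estimates and from the bilinear interpolation, and your sketch does not yet capture this correctly. The paper proves a \emph{family} of $\mathcal{C}^4$ estimates
\[
\|W_1T_lW_2\|_{\mathcal{C}^4}\lesssim 2^{(\frac12-\frac1{p_1}-\frac1{p_2})l}\|W_1\|_{L^{p_1}_tL^2_x}\|W_2\|_{L^{p_2}_tL^2_x}
\]
on a region of $(\tfrac1{p_1},\tfrac1{p_2})$ obtained by multilinearly interpolating the explicit computations at $(2,2)$, $(4,\infty)$, $(\infty,4)$; note the exponent is negative (decay) at $(2,2)$ but positive (growth) at $(4,\infty)$. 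Interpolating these against the non-decaying $\mathcal{C}^2$ bound gives, for each fixed $\beta'\in(2,4]$, the estimate $\|W_1T_lW_2\|_{\mathcal{C}^{\beta'}}\lesssim 2^{\mu(p_1,p_2)l}\|W_1\|_{L^{p_1}_tL^2_x}\|W_2\|_{L^{p_2}_tL^2_x}$ with $\mu(p_1,p_2)=\tfrac12-\tfrac1{p_1}-\tfrac1{p_2}$, valid for $(\tfrac1{p_1},\tfrac1{p_2})$ in a small neighbourhood of $(\tfrac14,\tfrac14)$; note that at $(\tfrac14,\tfrac14)$ itself $\mu=0$, so there is still no pointwise decay at the central exponent. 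It is precisely the bilinear real interpolation (Bergh--L\"ofstr\"om, Exercise 5, p.~76) across this \emph{open neighbourhood} that upgrades the sup-type $\ell^\infty_\mu(\mathcal{C}^{\beta'})$ control to an $\ell^1(\mathcal{C}^{\beta'})$ bound with $L^{4,2}_tL^2_x$ input, simultaneously delivering the Lorentz refinement and the summability over $l$. Your claim that one gets $\|T_N\|_{\mathcal{C}^{\beta'}}\lesssim N^{-\delta(\beta')}\|W\|_{L^{4,2}_tL^2_x}^2$ with $\delta(\beta')>0$ block-by-block is not what happens; no individual estimate at the central exponent has positive decay, and the summability is a genuinely interpolation-theoretic phenomenon. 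You should revisit Lemma~\ref{l:eachpiece} and the interpolation step to correct this.
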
 

Thanks to Proposition \ref{p:duality}, our goal is equivalent to 
\begin{equation}\label{e:goal1}
\| W SS^* \overline{W} \|_{\mathcal{C}^{\beta'}(L^2)} \lesssim \| W \|_{L^{4,2}_tL^2_x}^2
\end{equation}
for all $W \in L^{4,2}_tL^2_x(\mathbb{R}^{1+1})$. To establish this, we first decompose the operator by using the dyadic partition of unity 
\[
\chi+ \sum_{l=1}^\infty \psi(2^{-l} \cdot)\equiv 1, 
\]
where $\psi \in C^\infty_c ( (-2,-\frac12) \cup (\frac12, 2) )$ is a  suitable nonnegative even function. In view of the kernel representation \eqref{e:KernelRep}, we break up the operator $SS^*$ as follows 
\begin{align*}
SS^* F (t,x) & = \int_{\mathbb{R}^{1+1}} K(t-t',x-x')F(t',x')\, \mathrm{d}t' \mathrm{d}x' := T_0 F(t,x)+ \sum_{l\ge 1}T_l F(t,x),
\end{align*}
where 
\begin{align*}
T_0F(t,x) &= \int_{\mathbb{R}^{1+1}} \chi(t-t') K(t-t',x-x')F(t',x')\, \mathrm{d}t'\mathrm{d}x', \\
T_l F(t,x) &= \int_{\mathbb{R}^{1+1}} \psi(2^{-l}(t-t')) K(t-t',x-x')F(t',x')\, \mathrm{d}t'\mathrm{d}x',
\end{align*}
and thus the integral kernel of $T_l$ is given by 
$$
K_l(t-t',x-x') = \psi(2^{-l}(t-t')) K(t-t',x-x').
$$
In order to estimate each term $\| W T_l \overline{W}\|_{\mathcal{C}^{\beta'}(L^2)}$, we use the following.
\begin{lemma} \label{l:eachpiece}
For  $l\ge 1$ we have the following estimates with $C$  independent of $l$: 
\begin{align}
\label{e:Schatten2}
\big\| W_1 T_l W_2 \big\|_{\mathcal{C}^{2}(L^2)} & \le C \| W_1 \|_{L^4_tL^2_x} \| W_2 \|_{L^4_tL^2_x}, 
\\
\label{e:Schatten4}
\big\| W_1 T_l W_2 \big\|_{\mathcal{C}^{4}(L^2)} &\le C 2^{(\frac12-\frac1{p_1}-\frac1{p_2})l} \| W_1 \|_{L^{p_1}_tL^2_x} \| W_2 \|_{L^{p_2}_tL^2_x}
\end{align}
provided that $p_1$ and $p_2$ satisfy  $(\frac1{p_1},\frac1{p_2}) \in [0,\frac12]^2$  and 
$$
\frac1{p_1}+\frac1{p_2} \ge \frac14,\;\;\; \frac1{p_1} - \frac1{2p_2} \le \frac14,\;\;\; \frac1{p_2} -\frac1{2p_1}  \le \frac14.
$$

\end{lemma}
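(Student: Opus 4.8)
\textbf{Proof plan for Lemma \ref{l:eachpiece}.}

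The plan is to estimate the two Schatten norms of $W_1 T_l W_2$ directly and explicitly, exploiting that $\mathcal{C}^2$ is a Hilbert--Schmidt norm computable from the integral kernel, while $\mathcal{C}^4$ is handled via $\|A\|_{\mathcal{C}^4}^2 = \|A^*A\|_{\mathcal{C}^2}$. First I would write down the integral kernel of $W_1 T_l W_2$, which is
\[
(t,x;t',x') \mapsto W_1(t,x)\, K_l(t-t',x-x')\, W_2(t',x'),
\]
with $K_l(t,x) = \psi(2^{-l}t) K(t,x)$. For the $\mathcal{C}^2$ bound \eqref{e:Schatten2}, I would compute the $L^2(\mathbb{R}^{1+1}\times\mathbb{R}^{1+1})$ norm of this kernel: squaring and integrating in $x,x'$ first uses only $\sup_x |K_l(t-t',x)| \lesssim 2^{-l/2}\1_{|t-t'|\sim 2^l}$ (from \eqref{e:dispersive} together with the support of $\psi$), which pulls out a factor $2^{-l}$ and leaves $\|W_1(t,\cdot)\|_{L^2_x}^2 \|W_2(t',\cdot)\|_{L^2_x}^2$ integrated against $\1_{|t-t'|\sim 2^l}$ in $t,t'$; since that indicator has $t'$-measure $\lesssim 2^l$ for each $t$, the two factors of $2^{\pm l}$ cancel and Cauchy--Schwarz (or just Young) in the $t$-variable against the $L^2_t$ norms of $\|W_i(t,\cdot)\|_{L^2_x}$ gives exactly $\|W_1\|_{L^4_tL^2_x}\|W_2\|_{L^4_tL^2_x}$ — note $L^4_t$ appears because after taking $L^2_x$ we still square again inside the $\mathcal{C}^2$ computation, so effectively we need $\big(\int \|W_i(t,\cdot)\|_{L^2_x}^4\big)^{1/2}$-type quantities, which is what Cauchy--Schwarz on $\1_{|t-t'|\sim 2^l}$ produces.

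For the $\mathcal{C}^4$ bound \eqref{e:Schatten4}, I would form $A = W_1 T_l W_2$ and compute $A^*A = \overline{W_2}\, T_l^* |W_1|^2 T_l\, W_2$, then take its $\mathcal{C}^2$ norm, i.e. the $L^2$ norm of the kernel of $A^*A$, which is
\[
(t_1,x_1;t_2,x_2)\mapsto \overline{W_2(t_1,x_1)}\Big(\int \overline{K_l(s-t_1,y-x_1)}\,|W_1(s,y)|^2\,K_l(s-t_2,y-x_2)\,\mathrm{d}s\,\mathrm{d}y\Big) W_2(t_2,x_2).
\]
Using $\sup_y|K_l|\lesssim 2^{-l/2}\1_{|\cdot|\sim 2^l}$ twice, the inner integral is bounded by $2^{-l}\int \1_{|s-t_1|\sim 2^l}\1_{|s-t_2|\sim 2^l}\|W_1(s,\cdot)\|_{L^2_x}^2\,\mathrm{d}s$ (after $L^1_y$ on $|W_1|^2$), which also forces $|t_1-t_2|\lesssim 2^l$. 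Squaring, integrating in $x_1,x_2$ gives $\|W_2(t_1,\cdot)\|_{L^2_x}^2\|W_2(t_2,\cdot)\|_{L^2_x}^2$, and then one is left with a convolution-type integral in $t_1,t_2,s$ with kernels localized at scale $2^l$; distributing the available powers of $2^l$ and using the $L^{p_i}_t$ norms through Young's inequality (or interpolation between the extreme choices of $(p_1,p_2)$) yields the factor $2^{(\frac12-\frac1{p_1}-\frac1{p_2})l}$. The admissibility conditions $\frac1{p_1}+\frac1{p_2}\ge\frac14$ and $\frac1{p_i}-\frac1{2p_j}\le\frac14$ are exactly the constraints under which the relevant Young/Hölder exponents in the three-fold $t$-integral are valid.

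The main obstacle I anticipate is bookkeeping in the $\mathcal{C}^4$ estimate: keeping track of which variable is integrated in $L^1$ versus $L^2$, correctly accounting for the two $2^{-l/2}$ factors from the two appearances of $K_l$ against the measure $2^l$ of each time-localization indicator, and then verifying that the convolution in $(t_1,s,t_2)$ closes under Young's inequality precisely in the stated region of $(\frac1{p_1},\frac1{p_2})$. Rather than optimizing exponents by hand at a general $(p_1,p_2)$, I expect it is cleanest to establish \eqref{e:Schatten4} at the vertices of the polygon cut out by the three inequalities (e.g.\ $(\frac1{p_1},\frac1{p_2}) \in \{(\tfrac14,0),(0,\tfrac14),(\tfrac12,\tfrac14),(\tfrac14,\tfrac12),(\tfrac12,\tfrac12)\}$) by direct computation, and then fill in the interior by complex (Stein) interpolation of the analytic family $W_1 T_l W_2$ in the $W_i$; the power $2^{(\frac12-\frac1{p_1}-\frac1{p_2})l}$ is affine in $(\frac1{p_1},\frac1{p_2})$ and hence interpolates correctly, with $C$ uniform in $l$ at each vertex and therefore uniform in $l$ throughout.
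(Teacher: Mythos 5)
Your plan is correct and essentially reproduces the paper's proof: the $\mathcal{C}^2$ bound is exactly the Hilbert--Schmidt/Young computation you describe, and the $\mathcal{C}^4$ bound is obtained by passing to $\|A^*A\|_{\mathcal{C}^2}$, reducing via the dispersive bound to a four-fold time integral against $h_i(t)=\|W_i(t,\cdot)\|_{L^2_x}^2$, proving vertex estimates, and interpolating (the paper uses multilinear real interpolation of the resulting $4$-linear form rather than Stein interpolation of an analytic family, but either closes the argument). One small correction to your vertex list: the stated inequalities cut out the \emph{triangle} with vertices $(\tfrac14,0)$, $(0,\tfrac14)$, $(\tfrac12,\tfrac12)$ --- the points $(\tfrac12,\tfrac14)$ and $(\tfrac14,\tfrac12)$ violate $\tfrac1{p_1}-\tfrac1{2p_2}\le\tfrac14$ and its mirror image --- so only the estimates at $(p_1,p_2)=(2,2),(4,\infty),(\infty,4)$ are needed (and by symmetry only the first two require separate proofs, which is precisely what the paper does); your two extra points do in fact satisfy the same bound by a Cauchy--Schwarz in the $s$-integral, so carrying them along would be harmless but unnecessary.
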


\begin{proof}[Proof of Lemma \ref{l:eachpiece}]
For \eqref{e:Schatten2}, since
\[
|K_l(t-t',x-x')| \lesssim \psi(2^{-l}(t-t')) |t-t'|^{-1/2}
\]
follows from \eqref{e:dispersive}, we may use Young's convolution inequality to obtain
\begin{align*}
\big\| W_1 T_l W_2 \big\|_{\mathcal{C}^{2}(L^2)}^2 & = \int |W_1(t,x)|^2 |K_l(t-t',x-x')|^2 |W_2(t',x')|^2 \, \mathrm{d}t\mathrm{d}t'\mathrm{d}x\mathrm{d}x' \\
& \lesssim \| W_1 \|_{L^4_tL^2_x}^2 \| W_2 \|_{L^4_tL^2_x}^2 
\end{align*}
uniformly in $l$.

For \eqref{e:Schatten4}, first note that 
\[
\big\| W_1 T_l W_2 \big\|_{\mathcal{C}^4(L^2)}^4 
= 
\big\| \overline{W_2} T_l^* |W_1|^2 T_l W_2 \big\|_{\mathcal{C}^2(L^2)}^2,
\]
and after a few lines of computation we see that 
\[
\overline{W_2} T_l^* |W_1|^2 T_l W_2 [F](t,x) = \int_{\mathbb{R}^{1+1}} \mathcal{K}_l(t,t'',x,x'') F(t'',x'')\, \mathrm{d}t''\mathrm{d}x'',
\]
where the integral kernel $\mathcal{K}_l(t,t'',x,x'') $ is given by 
\[
\overline{W_2(t,x)} \int_{\mathbb{R}^{1+1}} K_l(t-t',x-x')|W_1(t',x')|^2 K_l(t'-t'',x'-x'') \, \mathrm{d}t'\mathrm{d}x' \cdot W_2(t'',x''). 
\]
Thanks to \eqref{e:dispersive} (and relabelling $t = t_1$ and $t'' = t_4$) we see 
\begin{align*}
\big\| W_1 T_l W_2 \big\|_{\mathcal{C}^4(L^2)}^4 
& = 
\int_{\mathbb{R}^{1+1}} \int_{\mathbb{R}^{1+1}}   |\mathcal{K}_l(t_1,t_4,x,x'')|^2\, \mathrm{d}t_1 \mathrm{d}x \mathrm{d}t_4  \mathrm{d}x'' \\
& \lesssim 
\int_{\mathbb{R}} \int_{\mathbb{R}} \|W_2(t_1,\cdot)\|_{L^2_x}^2 L_l(t_1,t_4)^2 \|W_2(t_4,\cdot)\|_{L^2_x}^2 \, \mathrm{d}t_1 \mathrm{d}t_4, 
\end{align*}
where 
$$
L_l(t_1,t_4) = 
\int_{\mathbb{R}} 2^{-l/2} \psi(2^{-l}(t_1-t')) \|W_1(t',\cdot)\|_{L^2_x}^2 2^{-l/2}\psi(2^{-l}(t'-t_4)) \, \mathrm{d}t'.
$$
By multiplying out the square of $L_l(t_1,t_4)$, we have
\begin{equation} \label{e:(2)main}
\big\| W_1 T_l W_2 \big\|_{\mathcal{C}^4(L^2)}^4 \lesssim \Lambda_l (h_2,h_1,h_1,h_2),
\end{equation}
where  $h_k(t) = \|W_k(t,\cdot)\|_{L^2_x}^2$, $k=1,2$, and  $\Lambda_l$ is the $4$-linear form given by
\begin{align*}
\Lambda_l(g_1,g_2,g_3,g_4) 
& := 
2^{-2l} \int_{\mathbb{R}^4} \psi(2^{-l}(t_1-t_2)) \psi(2^{-l}(t_1-t_3)) \\
& \quad \times \psi(2^{-l}(t_2-t_4)) \psi(2^{-l}(t_3-t_4)) 
 \prod_{i=1}^4 g_i(t_i)\,\mathrm{d}t_i
\end{align*}

We shall prove the desired estimates \eqref{e:Schatten4} at the three points $(p_1,p_2) = (2,2), (4,\infty)$, and $(\infty,4)$, and then employ multilinear interpolation to deduce the estimates in the claimed region. The estimate at $({p_1},{p_2}) = (2,2)$ follows immediately from \eqref{e:(2)main} and the fact that $\|\psi\|_{L^\infty} \lesssim 1$. Thus, by symmetry, it suffices to prove \eqref{e:Schatten4} at $({p_1},{p_2}) = (4,\infty)$.
 
From $\|\psi\|_{L^\infty} \lesssim 1$ and $\|\psi(2^{-l}\cdot)\|_{L^1} \sim 2^l$, we see 
\begin{align*}
|\Lambda_l(g_1,g_2,g_3,g_4)| 
& \lesssim 2^{-2l} \|g_1\|_{L^\infty} \|g_3\|_{L^\infty} \|g_4\|_{L^\infty} \\
&\times\int_{\mathbb{R}^4} \psi(2^{-l}(t_1-t_2)) \psi(2^{-l}(t_1-t_3)) \psi(2^{-l}(t_2-t_4)) g_2(t_2)\, \prod_{i=1}^4\mathrm{d}t_i\\
& \sim 
2^l\|g_1\|_{L^\infty}\|g_2\|_{L^1}\|g_3\|_{L^\infty}\|g_4\|_{L^\infty}.
\end{align*}
By symmetry, we also have 
$$
|\Lambda_l(g_1,g_2,g_3,g_4)| \lesssim 2^l\|g_1\|_{L^\infty}\|g_2\|_{L^\infty}\|g_3\|_{L^1}\|g_4\|_{L^\infty},
$$
and so it follows from interpolation between these two bounds that 
$$
|\Lambda_l(g_1,g_2,g_3,g_4)| \lesssim 2^l\|g_1\|_{L^\infty}\|g_2\|_{L^2}\|g_3\|_{L^2}\|g_4\|_{L^\infty}.
$$
In particular, from \eqref{e:(2)main} this implies 
$$
\big\| W_1 T_l W_2 \big\|_{\mathcal{C}^4(L^2)}^4 \lesssim 2^l\|h_1\|_{L^2}^2 \|h_2\|_{L^\infty}^2 = 2^l\|W_1\|_{L^4_tL^2_x}^4 \|W_2\|_{L^\infty_tL^2_x}^4
$$
which gives \eqref{e:Schatten4} at $({p_1},{p_2}) = (4,\infty)$. 
\end{proof}

\begin{proof}[Proof of Proposition \ref{p:abstract}] By interpolating between \eqref{e:Schatten2} and \eqref{e:Schatten4}, for each $2< \beta' \le 4$, there exists $\delta(\beta')>0$ such that $\lim_{\beta'\to 2} \delta(\beta') = 0$ and that the estimate
\begin{align}
\label{tlbeta}
\big\| W_1 T_l W_2 \big\|_{\mathcal{C}^{\beta'}(L^2)} 
\lesssim 2^{(\frac12-\frac1{p_1}-\frac1{p_2})l} \| W_1 \|_{L^{p_1}_tL^2_x} \| W_2 \|_{L^{p_2}_tL^2_x} 
\end{align}
holds for all $p_1,p_2$ satisfying $|(\frac1{p_1},\frac1{p_2}) - (\frac14,\frac14)| \le \delta(\beta')$. 

Now fix $\beta_* < 2$ such that $\beta_*' \in (2,4)$ and define the bilinear operator $\mathcal{T}$ by
$$
\mathcal{T}(W_1,W_2) := ( W_1 T_l W_2 )_{l \geq 1}.
$$
Then \eqref{tlbeta} shows $\mathcal{T} : L^{p_1}_tL^2_x \times L^{p_2}_tL^2_x \to \ell^\infty_{\mu(p_1,p_2)} (\mathcal{C}^{\beta_*'}(L^2))$ is bounded for all $(\frac1{p_1},\frac1{p_2})$ in $\delta(\beta_*')$-neighborhood of $(\frac14,\frac14)$. Here, $\mu(p_1,p_2) = \frac1{p_1}+\frac1{p_2}-\frac12$ and, for a general Banach space $X$ and sequence $(g_l)_l \subset X$, the norm is given by  
\[
\| (g_l)_l \|_{\ell^p_\mu(X)} = \bigg( \sum_{l} 2^{p\mu l} \| g_l \|_X^p \bigg)^{1/p}
\] 
for $p < \infty$, and $ \| (g_l)_l \|_{\ell^\infty_\mu(X)} = \sup_l 2^{\mu l} \|g_l\|_X$. Therefore, a bilinear real interpolation argument (see \cite[Exercise 5, Page 76]{BerghLofstrom}) reveals that $\mathcal{T} : L^{4,2}_tL^2_x \times L^{4,2}_tL^2_x \to \ell^1(\mathcal{C}^{\beta_*'}(L^2))$ is bounded, which in particular means 
$$
\sum_{l\ge1} \big\| W T_l \overline{W} \big\|_{\mathcal{C}^{\beta_*'}(L^2)} \lesssim \| W \|_{L^{4,2}_t L^2_x}^2.
$$

Finally, we consider the operator $W_1T_0 W_2$. Note that, in a similar manner to the proof of Lemma \ref{l:eachpiece}(1), we have
\begin{align*}
\big\| W T_0 \overline{W} \big\|_{\mathcal{C}^{2}(L^2)}^2 & = \int |W(t,x)|^2 \chi^2(t-t') |K(t-t',x-x')|^2 |W(t',x')|^2 \, \mathrm{d}t\mathrm{d}t'\mathrm{d}x\mathrm{d}x' \\
& \lesssim \int \|W(t,\cdot)\|_{L^2_x}^2  \chi^2(t-t')  \|W(t',\cdot)\|^2 \, \mathrm{d}t\mathrm{d}t' \\
& \lesssim \| W \|_{L^4_tL^2_x}^4
\end{align*}
which certainly implies
\[
\| W T_0 \overline{W} \|_{\mathcal{C}^{\beta_*'}(L^2)} \lesssim \| W \|_{L^{4,2}_t L^2_x}^2
\]
via embeddings. Hence, by the triangle inequality, we have 
\begin{equation} \label{e:break}
\big\| W SS^* \overline{W} \big\|_{\mathcal{C}^{\beta_*'}(L^2)} \le \big\| W T_0 \overline{W} \big\|_{\mathcal{C}^{\beta_*'}(L^2)} + \sum_{l\ge1} \big\| W T_l \overline{W} \big\|_{\mathcal{C}^{\beta_*'}(L^2)} \lesssim  \| W \|_{L^{4,2}_t L^2_x}^2
\end{equation}
which yields our goal \eqref{e:goal1} for all $\beta_* \in (2,4)$. This suffices to prove Proposition \ref{p:abstract}.
\end{proof}

\begin{proof}[Proof of Theorem \ref{t:main}]
As indicated at the start of Section \ref{section:Strichartz}, for the sufficiency part of Theorem \ref{t:main}, we obtain \eqref{e:U2local} as an immediate consequence of Proposition \ref{p:abstract} with $S = U_2P$, and a standard rescaling argument to remove the frequency cut-off $P$ yields \eqref{e:wONS-1d}. 

To see the necessity of $\beta<2$, it is enough to show the failure of \eqref{e:wONS-1d} with $\beta=2$ thanks to the inclusion relation between Schatten spaces. Suppose \eqref{e:wONS-1d} holds true with $\beta = 2$, then from Proposition \ref{p:duality}, we also have 
$$
\| W_1 U_2U_2^* W_2 \|_{\mathcal{C}^2(L^2)} \lesssim \| W_1 \|_{L^{4,2}_tL^2_x} \| W_2 \|_{L^{4,2}_tL^2_x}. 
$$
On the other hand, the kernel of $UU^*$ is given by $C |t-t'|^{-\frac12} e^{-\frac{|x-x'|^2}{4i(t-t')}}$ and therefore
$$
\| W_1 U_2U_2^* W_2 \|_{\mathcal{C}^2(L^2)}^2 = C \int_{\mathbb{R}^{1+1}}\int_{\mathbb{R}^{1+1}} |W_1(t,x)|^2 |t-t'|^{-1} |W_2(t',x')|^2\, \mathrm{d}t\mathrm{d}x\mathrm{d}t'\mathrm{d}x' .
$$
Choosing $W_1 = W_2 = \1_{[-1,1]^2}$, we get a contradiction because $\| W_1 U_2U_2^* W_2 \|_{\mathcal{C}^2(L^2)}=\infty$.  This  establishes the necessity of the condition $\beta < 2$ for \eqref{e:wONS-1d}.
\end{proof}

\begin{remarks}
\begin{enumerate}
[leftmargin=0cm, labelsep=4pt, parsep=7pt, itemindent=15pt]
\item[$(\rm I)$]
Ideas in the proof of Proposition \ref{p:abstract} yield the bound
\begin{equation}\label{e:Strong}
\bigg\|  \sum_j \nu_j |e^{it\partial_x^2} f_j|^2 \bigg\|_{L^{2,\frac43}_tL^\infty_x(\mathbb{R}^{1+1})} \lesssim \| \nu \|_{\ell^\frac43}
\end{equation}
for orthonormal systems $(f_j)_j$ in $L^2(\mathbb{R})$ and $\nu = (\nu_j)_j$ in $\ell^{\frac43}$, which, in particular, is a Lorentz-space improvement of the strong-type estimate
\begin{equation*}
\bigg\|  \sum_j \nu_j |e^{it\partial_x^2} f_j|^2 \bigg\|_{L^{2}_tL^\infty_x(\mathbb{R}^{1+1})} \lesssim \| \nu \|_{\ell^\frac43}.
\end{equation*}
Indeed, if one computes $\| W U_2U_2^* \overline{W}\|_{\mathcal{C}^4(L^2)}$ directly as we did in the above, then we obtain
\begin{equation}\label{e:MultiForm}
\big\| W U_2U_2^* \overline{W} \big\|_{\mathcal{C}^4}^4 
\lesssim 
\int_{\mathbb{R}^4} |t_1-t_2|^{-\frac12} |t_1-t_3|^{-\frac12} |t_2-t_4|^{-\frac12} |t_3-t_4|^{-\frac12} \prod_{i=1}^4 h(t_i)\, \mathrm{d}t_i,
\end{equation}
where $h(t) = \| W(t,\cdot) \|_{L^2_x}^2$. We regard the right-hand side of \eqref{e:MultiForm} as an 8-linear rank-one Brascamp--Lieb form and we may use Barthe's characterisation in \cite{Barthe} of the associated Brascamp--Lieb polytope in the rank-one case and Christ's observations in \cite{Perry} on extending classical Brascamp--Lieb estimates to Lorentz spaces to conclude
\begin{equation} \label{e:StrongSchatten}
\big\| W U_2U_2^* \overline{W} \big\|_{\mathcal{C}^4(L^2)} \lesssim \| W \|_{L^{4,8}_tL^2_x}^2,
\end{equation}   
or equivalently, \eqref{e:Strong}. We refer the reader to \cite{BCCT_GAFA,BL,Brown,BLNS} for further details regarding the Brascamp--Lieb inequality and its Lorentz space refinement.

\item[$(\rm I\!I)$]
If one can appropriately exploit the orthogonality of the $T_l$, rather than the application of the triangle inequality in \eqref{e:break}, it seems possible to upgrade \eqref{e:wONS-1d} to a strong-type estimate. For instance, it seems reasonable to expect that for all $\beta \in [1,2]$ we have 
\begin{equation}\label{e:orthogonality}
\big\| W U P^2 U^* \overline{W} \big\|_{\mathcal{C}^{\beta'}(L^2)} \le \big\| W T_0 \overline{W} \big\|_{\mathcal{C}^{\beta'}(L^2)} + \bigg( \sum_{l\ge1} \big\| W T_l \overline{W} \big\|_{\mathcal{C}^{\beta'}(L^2)}^\beta \bigg)^{1/\beta}. 
\end{equation}
Indeed, if $\beta=2$ it is easy to  see that \eqref{e:orthogonality} holds in this case. Also, \eqref{e:orthogonality} for $\beta =1$ is an easy consequence of the triangle inequality. However, it is not clear to us how to interpolate these two estimates.  

Under the assumption that \eqref{e:orthogonality} holds for all $\beta \in [1,2]$, it follows from our argument to show \eqref{e:wONS-1d} that 
\begin{equation}\label{e:Improved}
\big\| W UU^* \overline{W} \big\|_{\mathcal{C}^{\beta'}(L^2)} \lesssim \| W \|_{L^{4,2\beta}_t,L^2_x}^2,
\end{equation}
for $\beta<2$ arbitrary close to 2. On the other hand, one has a Lorentz improvement if $\beta' = 4$ as in \eqref{e:StrongSchatten}. Interpolating \eqref{e:Improved} and \eqref{e:StrongSchatten}, one would obtain the desired strong-type estimate for any $\beta<2$. 

\item[$(\rm I\!I\!I)$]
Assuming the more general decay hypothesis
\begin{equation*}
\sup_{x \in \mathbb{R}} |K(t,x)| \lesssim (1 + |t|)^{-\sigma}	 \qquad (t \in \mathbb{R})
\end{equation*}
for some $\sigma > 0$, one may easily generalize our argument in the proof of Proposition \ref{p:abstract} to obtain
$$
\bigg\|  \sum_{j} \nu_j |S f_j|^2  \bigg\|_{L^{q/2,\infty}_tL^\infty_x(\mathbb{R}^{1+1})} \lesssim \| \nu\|_{\ell^\beta}
$$
for orthonormal systems $(f_j)_j$ in $L^2(\mathbb{R})$ and $\nu = (\nu_j)_j$ in $\ell^{\beta}$, where $q = \max\{\frac{2}{\sigma},4\}$ and $\beta<2$. It is also clear from an inspection of the proof that the domain of the spatial variable may be generalized.
\end{enumerate}
\end{remarks}

\section{Proofs of Theorem \ref{t:maximal} and Corollary \ref{c:pointwise}} \label{section:maximal}

Recalling the identity \eqref{e:change}, our first step in establishing Theorem \ref{t:maximal} is to observe the following analogue of Theorem \ref{t:main}.
\begin{theorem}\label{t:fractional-end}
Suppose $\beta < 2$. Then the estimate
\[
\bigg\|  \sum_{j}\nu_j |U_{\frac12}|\partial_x|^{-\frac38} f_j |^2 \bigg\|_{L^{2,\infty}_tL^\infty_x(\mathbb{R}^{1+1})} \lesssim \| \nu \|_{\ell^\beta}
\]
holds for all systems of orthonormal functions $(f_j)_j$ in $L^2(\mathbb{R})$ and $\nu=(\nu_j)_j$ in $\ell^\beta$. 
\end{theorem}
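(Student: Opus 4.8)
The plan is to reduce Theorem~\ref{t:fractional-end} to an application of Proposition~\ref{p:abstract}. The operator in question is $S := U_{1/2}|\partial_x|^{-3/8}$, so first I would compute the composition $SS^*$. Writing everything on the Fourier side, we have $\widehat{Sf}(t,\xi) = e^{it|\xi|^{1/2}}|\xi|^{-3/8}\widehat{f}(\xi)$, and hence
\[
SS^*F(t,x) = \int_{\mathbb{R}^{1+1}} K(t-t',x-x') F(t',x')\,\mathrm{d}t'\mathrm{d}x', \qquad K(t,x) = \frac{1}{2\pi}\int_{\mathbb{R}} e^{i(x\xi + t|\xi|^{1/2})}|\xi|^{-3/4}\,\mathrm{d}\xi.
\]
So $SS^*$ is a convolution operator of exactly the form \eqref{e:KernelRep}, and the only thing that remains is to verify the dispersive bound \eqref{e:dispersive}, namely $\sup_x |K(t,x)| \lesssim (1+|t|)^{-1/2}$. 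Once that is in hand, Proposition~\ref{p:abstract} applies verbatim and gives the claimed estimate for every $\beta < 2$.

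The main (and essentially only) work is the kernel estimate. For $|t| \lesssim 1$ the bound is trivial since the symbol $|\xi|^{-3/4}$ is integrable near the origin after the cutoff is irrelevant here---wait, $|\xi|^{-3/4}$ is \emph{not} integrable at infinity, so one must genuinely use oscillation in $\xi$ for large $|\xi|$ regardless of $t$. I would therefore perform a Littlewood--Paley decomposition $K = \sum_{k\in\mathbb{Z}} K_k$ in the frequency variable $\xi$, with $K_k$ localised to $|\xi| \sim 2^k$, so that on the support of $K_k$ one has $|\xi|^{-3/4} \sim 2^{-3k/4}$. On the piece $K_k$ the phase is $x\xi + t|\xi|^{1/2}$ with $\xi$-derivative $x + \tfrac{t}{2}|\xi|^{-1/2}\mathrm{sgn}(\xi)$ and second derivative of size $\sim |t| 2^{-3k/2}$. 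Non-stationary phase (integration by parts) handles the regime where the first derivative is bounded below; where it can vanish, stationary phase / van der Corput with the second-derivative bound gives a gain of $(|t|2^{-3k/2})^{-1/2}$. Balancing these contributions---the trivial bound $2^{-3k/4}\cdot 2^k = 2^{k/4}$ for small $k$ against the oscillatory gain for large $k$---and summing the geometric series in $k$ should yield $\sup_x |K_k(t,x)| \lesssim \min\{2^{k/4}, 2^{k/4}(|t|2^{k/2})^{-1/2}\}$ or similar, whose sum over $k$ is $\lesssim |t|^{-1/2}$ for $|t|\gtrsim 1$. (This is exactly the standard decay rate for the order-$1/2$ fractional Schr\"odinger propagator in one dimension with this particular homogeneity of symbol; it is the reason the exponent $-3/8$ was chosen in \eqref{e:change}.) The bookkeeping is routine van der Corput but needs to be done with a little care because the symbol is homogeneous of a negative order, so I expect this to be the one genuinely technical step.

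An alternative, and perhaps cleaner, route is to avoid recomputing anything: observe that by the change of variables $\eta = |\xi|^{1/2}$ (i.e. $\xi = \eta^2$ on $\xi>0$), the kernel $K$ above is, up to constants and the splitting into $\pm$ pieces, the same object that appears implicitly in \eqref{e:change}. That is, the identity \eqref{e:change} already tells us that $U_{1/2}|\partial_x|^{-3/8}$ applied to (Fourier-supported-on-half-line) data is a component of $2e^{-it\partial_x^2}|\partial_x|^{-1/4}$ after swapping the roles of $t$ and $x$; and the dispersive decay of the kernel of $U_2 P^2 U_2^*$ in $t$ translates, under this swap, into decay of the kernel of $SS^*$ in the new time variable. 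I would nonetheless present the direct stationary-phase computation of $K$ since it is self-contained and makes the hypothesis \eqref{e:dispersive} transparent. In summary: (i) compute $SS^*$ and identify its convolution kernel $K$; (ii) prove $\sup_x|K(t,x)|\lesssim(1+|t|)^{-1/2}$ by a dyadic frequency decomposition and van der Corput; (iii) invoke Proposition~\ref{p:abstract}. The only obstacle is step (ii), and it is a standard one-dimensional oscillatory integral estimate.
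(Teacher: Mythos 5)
Your overall skeleton matches the paper's proof: identify $SS^*$ as a convolution operator, verify the dispersive hypothesis \eqref{e:dispersive}, and invoke Proposition \ref{p:abstract}. But there is a genuine gap: you apply this to $S = U_{1/2}|\partial_x|^{-3/8}$ \emph{without} a frequency cut-off, and for that operator the hypothesis \eqref{e:dispersive} is false. The kernel $K(t,x) = \frac{1}{2\pi}\int e^{i(x\xi+t|\xi|^{1/2})}|\xi|^{-3/4}\,\mathrm{d}\xi$ is homogeneous: substituting $\xi\mapsto\lambda\xi$ gives $K(t,x)=\lambda^{1/4}K(\lambda^{1/2}t,\lambda x)$, hence $K(t,x)=|t|^{-1/2}K(\mathrm{sgn}(t),x/t^2)$, so the best possible uniform-in-$x$ bound is $|t|^{-1/2}$, not $(1+|t|)^{-1/2}$; indeed $K(0,x)=c|x|^{-1/4}$ is already unbounded. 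This is not a cosmetic issue for Proposition \ref{p:abstract}: its proof estimates the piece $T_0$ (where $|t-t'|\lesssim 1$) in $\mathcal{C}^2$ using $\sup_x|K(t,x)|\lesssim 1$ there, and with only $|t-t'|^{-1/2}$ the resulting integral $\int h(t)\chi^2(t-t')|t-t'|^{-1}h(t')\,\mathrm{d}t\,\mathrm{d}t'$ diverges --- this non-integrable local singularity is precisely what drives the failure at $\beta=2$ in Theorem \ref{t:main}. Your dyadic/van der Corput bookkeeping, if carried out for all $k\in\mathbb{Z}$, would in fact reproduce the sharp $|t|^{-1/2}$ bound (each dyadic block containing the stationary point $|\xi_0|=t^2/(4x^2)$ contributes $\sim|\xi_0|^{-3/4}|\phi''(\xi_0)|^{-1/2}\sim|t|^{-1/2}$ uniformly in $k$, consistent with the scaling identity), so no amount of care in step (ii) rescues the claim as stated.

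The fix is exactly the move the paper makes and that you yourself use implicitly in your remark about \eqref{e:change}: first note the desired estimate is invariant under $f\mapsto\lambda^{1/2}f(\lambda\cdot)$ (which preserves orthonormality), so it suffices to prove it for $S=U_{1/2}|\partial_x|^{-3/8}P$. The truncated kernel $\int\chi(\xi)^2|\xi|^{-3/4}e^{i(x\xi+t|\xi|^{1/2})}\,\mathrm{d}\xi$ is trivially bounded for $|t|\lesssim 1$ (the symbol is integrable near the origin, and the integration is now confined to $|\xi|\le 1$ so there is no issue at infinity), and for $|t|\gtrsim 1$ your stationary-phase argument restricted to $|\xi|\le1$ gives $|t|^{-1/2}$; the paper simply cites Kenig--Ponce--Vega \cite[Lemma 2.7]{KPV} for this. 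With the localization in place, the rest of your argument goes through as written.
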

\begin{proof}
We invoke Proposition \ref{p:abstract} with $S = U_\frac12 |\partial_x|^{-\frac38} P$. It is clear that \eqref{e:KernelRep} holds with
\[
K(x,t) = \int_{\mathbb{R}} \chi(\xi)^2  |\xi|^{-3/4} e^{i(x \xi + t|\xi|^{1/2})}\, \mathrm{d}\xi,
\]
and the desired decay estimate \eqref{e:dispersive} holds thanks to work of Kenig--Ponce--Vega \cite[Lemma 2.7]{KPV}.
\end{proof}

\subsection{Proof of Theorem \ref{t:maximal} (Sufficiency part)}
A minor snag which arises when using \eqref{e:change} is that the transformation $f\mapsto f_{\pm}$ does not preserve the orthogonal structure and the orthonormality of $(f_j)_j$ does not always ensure the orthonormality of $(f_{j,+})_j$ and $(f_{j,-})_j$. To recover this, we introduce the reflection operator $R$ given by
\[
R\varphi(t,x) := \varphi(-t,-x)
\] 
and establish the following.
\begin{lemma}\label{l:symmetric}
\begin{enumerate}
[leftmargin=0cm, labelsep=4pt, parsep=7pt, itemindent=15pt]
\item[$\rm(I)$]
For each $f\in L^2$, 
\begin{equation}\label{e:symmetric}
\sqrt{2}(1\pm R) U_{2}|\partial_x|^{-\frac14}f(t,x) = e^{ix|\partial_t|^{\frac12}}|\partial_t|^{-\frac38}f_{\pm}(t) + e^{-ix|\partial_t|^{\frac12}}|\partial_t|^{-\frac38} f^*_{\pm}(t),
\end{equation}
where \begin{align*}
\widehat{f_{\pm}}(\eta) &= \frac{1}{\sqrt{2}} |\eta|^{-\frac14} \big( \1_{(0,\infty)}(\eta) \widehat{f}(|\eta|^{\frac12}) \pm \1_{(-\infty,0)}(\eta) \widehat{f}(-|\eta|^{\frac12})\big), \\
\widehat{f_{\pm}^*}(\eta) &= \frac{1}{\sqrt{2}} |\eta|^{-\frac14} \big( \1_{(0,\infty)}(\eta) \widehat{f}(-|\eta|^{\frac12}) \pm \1_{(-\infty,0)}(\eta) \widehat{f}(|\eta|^{\frac12}) \big).
\end{align*}
\item[$\rm(I\!I)$]
Suppose $(f_j)_j$ is a orthonormal system in $L^2$. Then each of the families $(f_{j,+})_j$, $(f_{j,-})_j$, $(f_{j,+}^*)_j$ and $(f_{j,-}^*)_j$ is orthonormal in $L^2$. 
\end{enumerate}
\end{lemma}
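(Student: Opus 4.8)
The plan is to derive part (I) by a direct Fourier-side computation, building on the change of variables already recorded in \eqref{e:change}, and then to read off part (II) by exploiting the explicit formulas for $\widehat{f_\pm}$ and $\widehat{f_\pm^*}$ together with the $\eta$-substitution $\eta = \xi^2$ that turns the $L^2(\mathbb{R})$ inner product on the frequency side into a comparison with the $L^2(\mathbb{R})$ inner product of the original data. First I would recall that, by definition of $U_2$ and of $|\partial_x|^{-1/4}$, one has
\[
2\,U_2|\partial_x|^{-\frac14}f(t,x) = \frac{1}{\pi}\int_{\mathbb{R}} e^{i(x\xi + t\xi^2)} |\xi|^{-\frac14}\widehat{f}(\xi)\,\mathrm{d}\xi,
\]
and then split the integral into the regions $\xi>0$ and $\xi<0$ and substitute $\eta=\xi^2$ in each, so that both pieces become integrals over $\eta\in(0,\infty)$ against $e^{it\eta}$, with spatial factors $e^{\pm ix|\eta|^{1/2}}$. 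This is exactly the computation underlying \eqref{e:change}; applying the reflection $R$ (which on the Fourier side in $(t,x)$ replaces $e^{i(x\xi+t\xi^2)}$ by $e^{-i(x\xi+t\xi^2)}$, equivalently sends $\eta\mapsto\eta$ but flips the sign in both exponentials) and forming the combinations $\tfrac12(1\pm R)$ isolates, after bookkeeping of the four sign patterns, precisely the two half-line pieces written as $e^{\pm ix|\partial_t|^{1/2}}|\partial_t|^{-3/8}$ acting on functions whose $t$-Fourier transforms are the stated $\widehat{f_\pm}$ and $\widehat{f_\pm^*}$. I would carry out this sign bookkeeping carefully but would not belabour it in the write-up.

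For part (II), I would compute, for the family $(f_{j,+})_j$, the inner product $\langle f_{j,+}, f_{k,+}\rangle_{L^2(\mathbb{R})}$ via Plancherel (up to the usual $2\pi$ normalization) as
\[
\frac{1}{2\pi}\int_{\mathbb{R}} \widehat{f_{j,+}}(\eta)\,\overline{\widehat{f_{k,+}}(\eta)}\,\mathrm{d}\eta
= \frac{1}{4\pi}\int_0^\infty |\eta|^{-\frac12}\widehat{f_j}(|\eta|^{\frac12})\overline{\widehat{f_k}(|\eta|^{\frac12})}\,\mathrm{d}\eta
+ \frac{1}{4\pi}\int_0^\infty |\eta|^{-\frac12}\widehat{f_j}(-|\eta|^{\frac12})\overline{\widehat{f_k}(-|\eta|^{\frac12})}\,\mathrm{d}\eta,
\]
where the cross terms drop out because $\1_{(0,\infty)}$ and $\1_{(-\infty,0)}$ have disjoint support. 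Substituting $\xi = |\eta|^{1/2}$, so $\mathrm{d}\eta = 2\xi\,\mathrm{d}\xi = 2|\eta|^{1/2}\,\mathrm{d}\xi$ and $|\eta|^{-1/2}\,\mathrm{d}\eta = 2\,\mathrm{d}\xi$, turns the first integral into $\frac{1}{2\pi}\int_0^\infty \widehat{f_j}(\xi)\overline{\widehat{f_k}(\xi)}\,\mathrm{d}\xi$ and the second into $\frac{1}{2\pi}\int_0^\infty \widehat{f_j}(-\xi)\overline{\widehat{f_k}(-\xi)}\,\mathrm{d}\xi = \frac{1}{2\pi}\int_{-\infty}^0 \widehat{f_j}(\xi)\overline{\widehat{f_k}(\xi)}\,\mathrm{d}\xi$; adding them recovers $\langle f_j, f_k\rangle_{L^2} = \delta_{jk}$. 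The identical computation, with the obvious relabelling of which half-line carries which reflected datum, handles $(f_{j,-})_j$, $(f_{j,+}^*)_j$ and $(f_{j,-}^*)_j$; the factor $\tfrac{1}{\sqrt2}$ in the definitions is exactly what is needed to normalize the sum of the two half-line contributions.

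I do not anticipate a genuine obstacle here — the lemma is essentially a bookkeeping statement — but the one point requiring care is the interplay of the $\pm$ signs and the indicator functions in the definitions of $f_\pm$ and $f_\pm^*$: one must check that in each of the four cases the two half-line pieces are supported on disjoint sets in $\eta$ so that no cross term survives, and that the Jacobian $|\eta|^{-1/2}\,\mathrm{d}\eta = 2\,\mathrm{d}\xi$ cancels both the $|\eta|^{-1/4}$ weights and accounts for the $\tfrac12$ prefactor. I would also note explicitly that the manipulations are justified first for Schwartz $f$ and then extended to all of $L^2(\mathbb{R})$ by density, since the identities in (I) and the orthonormality in (II) are all continuous in $f$ with respect to the $L^2$ norm.
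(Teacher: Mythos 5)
Your proposal is correct and follows essentially the same route as the paper: part (I) by the substitution $\eta=\xi^2$ on each half-line $\xi\gtrless 0$ combined with the observation that $R$ reverses the temporal frequency (so the reflected term contributes the $\eta<0$ half-line), and part (II) by Plancherel, disjointness of supports killing the cross terms, and the Jacobian $|\eta|^{-1/2}\,\mathrm{d}\eta=2\,\mathrm{d}\xi$ reducing everything to $\langle f_j,f_k\rangle_{L^2}$. No gaps.
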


\begin{proof}
First we show \eqref{e:symmetric}. 
Following the idea in \eqref{e:change}, we have
\begin{align*}
&2R U_2|\partial_x|^{-\frac14}f(t,x) = 2 e^{it\partial_x^{2}}|\partial_x|^{-\frac14} [f(-\cdot)](x) \\
=&
\frac{1}{2\pi} \int_0^\infty e^{ix|\eta|^{\frac12}}e^{-it\eta} |\eta|^{-\frac58} \widehat{f}(-|\eta|^{\frac12})\, \mathrm{d}\eta + \frac{1}{2\pi}\int_0^\infty e^{-ix|\eta|^{\frac12}}e^{-it\eta} |\eta|^{-\frac58} \widehat{f}(|\eta|^{\frac12})\, \mathrm{d}\eta \\
=&
\frac{1}{2\pi} \int_{-\infty}^0 e^{ix|\eta|^{\frac12}}e^{it\eta} |\eta|^{-\frac58} \widehat{f}(-|\eta|^{\frac12})\, \mathrm{d}\eta + \frac{1}{2\pi}\int_{-\infty}^0 e^{-ix|\eta|^{\frac12}}e^{it\eta} |\eta|^{-\frac58} \widehat{f}(|\eta|^{\frac12})\, \mathrm{d}\eta,
\end{align*}
from which we obtain \eqref{e:symmetric}.

Next, let us see the orthonormality. Using Parseval's identity, 
\begin{align*}
&4\pi \langle f_{j,+}, f_{k,+} \rangle_{L^2} \\
=& 
\int_{\mathbb{R}} |\eta|^{-\frac12} \big( \1_{(0,\infty)}(\eta)  {\widehat{f_j}(|\eta|^{\frac12})} \overline{\widehat{f_k}(|\eta|^{\frac12})} + \1_{(-\infty,0)}(\eta) {\widehat{f_j}(-|\eta|^{\frac12})} \overline{\widehat{f_k}(-|\eta|^{\frac12})}  \big)\, \mathrm{d}\eta \\
=&
\int_0^\infty|\eta|^{-\frac12} \big( {\widehat{f_j}(|\eta|^{\frac12})}  \overline{\widehat{f_k}(|\eta|^{\frac12})} +  {\widehat{f_j}(-|\eta|^{\frac12})} \overline{\widehat{f_k}(-|\eta|^{\frac12})}\big)\, \mathrm{d}\eta, 
\end{align*}
where we performed a simple change of the variable for the second term in the last equality. 
Then the change of variable $\xi^2 =\eta$ yields
\begin{align*}
4\pi \langle f_{j,+}, f_{k,+} \rangle_{L^2}
&=
2\int_0^\infty \big(  {\widehat{f_j}(\xi)} \overline{\widehat{f_k}(\xi)} +  {\widehat{f_j}(-\xi)} \overline{\widehat{f_k}(-\xi)} \big)\, \mathrm{d}\xi \\
& = 2 \int_{\mathbb{R}} \widehat{f_j}(\xi) \overline{{\widehat{f_k}(\xi)}}\, \mathrm{d}\xi = 4\pi \langle f_{j}, f_{k} \rangle_{L^2},
\end{align*}
and hence $\langle f_{j,+}, f_{k,+} \rangle_{L^2} = \langle f_{j}, f_{k} \rangle_{L^2}$. By a very similar calculation we also have $\langle f_{j,-}, f_{k,-} \rangle_{L^2} = \langle f_{j}, f_{k} \rangle_{L^2}$. Finally, since $f_{\pm}^* = f(-\cdot)_{\pm}$, we may deduce the corresponding identities for $(f_{j,+}^*)_j$ and $(f_{j,-}^*)_j$.
\end{proof}

\begin{proof}[Proof of the sufficiency part of Theorem \ref{t:maximal}]
By writing
$
2 U_2 = (1+R) U_2 + (1-R) U_2,
$
and applying Lemma \ref{l:symmetric} and the triangle inequality, we have
\begin{align*}
\bigg\| \sum_j \nu_j | U_2|\partial_x|^{-\frac14}f_j|^2   \bigg\|_{L^{2,\infty}_xL^\infty_t} \lesssim   N_++N_+^* + N_-+N_-^*
\end{align*}
where
\begin{align*} 
N_\pm & := \bigg\|  \sum_j \nu_j |e^{ix|\partial_t|^{\frac12}}|\partial_t|^{-\frac38}f_{j,\pm} |^2  \bigg\|_{L^{2,\infty}_xL^\infty_t}, \\
N_\pm^*  & := \bigg\| \sum_j \nu_j |e^{-ix|\partial_t|^{\frac12}}|\partial_t|^{-\frac38}f_{j,\pm}^* |^2\bigg\|_{L^{2,\infty}_xL^\infty_t}.
\end{align*}
Hence, applying Theorem \ref{t:fractional-end}, for $\beta < 2$ we get 
\[
\bigg\| \sum_j \nu_j | U_2|\partial_x|^{-\frac14}f_j|^2 \bigg\|_{L^{2,\infty}_xL^\infty_t} \lesssim \|\nu\|_{\ell^\beta}. \qedhere
\]
\end{proof}

\subsection{Proof of Theorem \ref{t:maximal} (Sharpness)}

Our goal is to show that the estimate 
$$
\bigg\| \sum_j \nu_j |e^{it\partial_x^2} |\partial_x|^{-\frac14}f_j|^2 \bigg\|_{L^{2,\infty}_xL^\infty_t(\mathbb{R}^{1+1})} \lesssim \| \nu \|_{\ell^{2,1}}
$$
for systems of orthonormal functions $(f_j)_j$ in $L^2(\mathbb{R})$ and $\nu = (\nu_j)_j$ in $\ell^{2,1}$ is false. If this estimate were true, by a semi-classical limiting argument, we may induce the following maximal estimate for the (weighted) velocity average of the kinetic transport equation 
\begin{equation}\label{e:velocity}
\bigg\| \int_{\mathbb{R}}f(x-tv,v)\, \frac{\mathrm{d}v}{|v|^{\frac12}} \bigg\|_{L^{2,\infty}_x L^\infty_t(\mathbb{R}^{1+1})} \lesssim \|f\|_{L^{2,1}_{x,v}}
\end{equation}
for any $f\in L^{2,1}_{x,v}(\mathbb{R}^{1+1})$. We refer the reader to \cite{BHLNS,sabin} for further details of such a limiting procedure.

\begin{proof}[Proof that \eqref{e:velocity} fails]
Suppose $\mathcal{N}\subset [-10,10]^2$ has Lebesgue measure zero and contains a unit line segment whose angle from the vertical line is at most $\frac{\pi}4$ through every point of $\{ (x,0):x\in[-1,1] \}$. Sets with the latter geometric property are often referred to as Nikodym sets and the existence of such sets with Lebesgue measure zero goes back to \cite{Nikodym} (see also \cite{Wisewell} for further discussion and an explicit construction).

Let us denote the $\delta$-neighbourhood of $\mathcal{N}$ by $\mathcal{N}_\delta$ for each $\delta>0$; we shall test \eqref{e:velocity} on the characteristic function $f = \1_{\mathcal{N}_\delta}$. Thanks to the geometric property of the Nikodym set $N$, for any $x\in [-1,1]$ there exist $t(x) \in \mathbb{R}$ and a unit interval $I(x)$ such that 
\[
\{ (x-v (-t(x),1)' : v\in I(x) \}
\] 
is contained in $\mathcal{N}$. Here, we use the notation $\omega' := |\omega|^{-1} \omega$. Note that, $t(x) \in [-1,1]$, thanks to the restriction of the angle to the vertical, and furthermore we have $I(x) \subset [-20,20]$ since $\mathcal{N}  \subset [-10,10]^2$. Thus, for any $x \in \mathbb{R}$, the above yields 
$$
\bigg\| \int_{\mathbb{R}}1_{\mathcal{N}_\delta}(x-tv,v)\, \frac{\mathrm{d}v}{|v|^{\frac12}} \bigg\|_{L^\infty_t}  \geq \1_{[-1,1]}(x) \int_{\mathbb{R}}1_{\mathcal{N}_\delta}(x-t(x)v,v)\, \frac{\mathrm{d}v}{|v|^{1/2}} 
\gtrsim 1,
$$
and hence
$$
\bigg\| \int_{\mathbb{R}}1_{\mathcal{N}_\delta}(x-tv,v)\, \frac{\mathrm{d}v}{|v|^{\frac12}} \bigg\|_{L^{2,\infty}_x L^\infty_t}  \gtrsim 1,
$$
with implicit constants uniform in $\delta>0$. On the other hand, since $\mathcal{N}$ has zero Lebesgue measure, we have $\|1_{\mathcal{N}_\delta}\|_{L^{2,1}_{x,v}} \sim |\mathcal{N}_\delta|^{\frac12} \to 0$ as $\delta\to0$. This establishes that \eqref{e:velocity} is false.  
\end{proof}

Whilst it was rather easier to establish the failure of the case $\beta = 2$ in Theorem \ref{t:main} by using duality and explicit computations using the Hilbert--Schmidt norm, it seems unclear how to proceed along similar lines for the necessity part of Theorem \ref{t:maximal}. More precisely, in order to see that \eqref{e:max} fails with $\beta = 2$, by duality it suffices to show the failure of the estimate 
\begin{equation*}
\big\| W_1 |\partial_x|^{-\frac14}U_2U_2^* |\partial_x|^{-\frac14} W_2 \big\|_{\mathcal{C}^2(L^2)} \lesssim \| W_1 \|_{L^{4,2}_xL^2_t} \| W_2 \|_{L^{4,2}_xL^2_t}. 
\end{equation*}
Thanks to the presence of the derivatives $|\partial_x|^{-\frac14}$, however, as far as we aware, it does not seem easy to have a convenient formula for the integral kernel of $|\partial_x|^{-\frac14} U_2U_2^*|\partial_x|^{-\frac14}$. The alternative approach we took in the above using a semi-classical limiting argument circumvents this issue and moreover allows us to show the failure of the restricted weak-type estimate.

\subsection{Proof of Corollary \ref{c:pointwise}}
For a given Hilbert space $\mathcal{H}$ and a unit vector $g\in \mathcal{H}$, we define $\Pi_g : \mathcal{H} \to \mathcal{H}$ to be the orthogonal projection onto the span of $g$ given by $\Pi_g\phi := \langle \phi, g \rangle g$. Note that for any compact operator $\gamma_0$ on $\mathcal{H}$, in particular $\gamma_0 \in \mathcal{C}^\beta(\mathcal{H})$, $\beta<2$, one can find $(\nu_j)_j$ and orthonormal system $(g_j)_j$ in $\mathcal{H}$ such that $\gamma_0 = \sum_j \nu_j \Pi_{g_j}$ thanks to the singular value decomposition. 

For $\gamma_0 \in \mathcal{C}(\dot{H}^\frac{1}{4}(\mathbb{R}))$ and its evolution $\gamma(t) = e^{-it\partial_x^2} \gamma_0 e^{it\partial_x^2}$ under \eqref{e:Hartree-free}, first we clarify the meaning of the density functions $\rho_{\gamma_0}$ and $\rho_{\gamma(t)}$. In the finite-rank case $\gamma_0 = \sum_{j=1}^N \nu_j \Pi_{g_j}$, the integral kernel is given by
\[
(x,y) \mapsto \sum_{j=1}^N \nu_j g_j(x) \overline{g_j(y)} 
\]
and thus we have
\[
\rho_{\gamma_0} (x) = \sum_{j=1}^N \nu_j |g_j(x)|^2.
\]
In the infinite-rank case, some care is required and we proceed via Lieb's generalization of the Sobolev inequality 
\begin{equation} \label{e:Lieb_Sobolev}
\bigg\| \sum_j \nu_j ||\partial_x|^{-\frac14} f_j|^2 \bigg\|_{L^2(\mathbb{R})} \lesssim \|\nu\|_{\ell^1}^{\frac{1}{2}} \|\nu\|_{\ell^\infty}^{\frac{1}{2}}
\end{equation}
for orthonormal systems $(f_j)_j$ in $L^2(\mathbb{R})$ and coefficients $\nu = (\nu_j)_j$ in $\ell^1 \cap \ell^\infty$ (see \cite{Lieb-Sobolev}). We may replace the right-hand side of \eqref{e:Lieb_Sobolev} by $\| \nu \|_{\ell^{2,1}}$ (using, for example, \cite[Ch. 5, Theorem 3.13]{SteinWeiss}) and, in view of the inclusion\footnote{
If $\beta < 2$ and $(\nu_j^*)_j$ is the sequence $(|\nu_j|)_j$ permuted in a decreasing order, we have
$
\| \nu \|_{\beta'} \lesssim (\sum_{j \geq 1} (\nu_j^*)^{\beta'} j^{\beta'/2} \cdot j^{-\beta'/2})^{1/\beta'} \lesssim  \sup_{j \geq 1} j^{1/2} \nu_j^* = \|\nu\|_{\ell^{2,\infty}}
$
and therefore, by duality, $\ell^{\beta} \subseteq \ell^{2,1}$. 
%
} 
$\ell^\beta \subseteq \ell^{2,1}$ for any $\beta <2$, we have 
\begin{equation}\label{e:lieb-sobolev}
\bigg\| \sum_j \nu_j ||\partial_x|^{-\frac14} f_j|^2 \bigg\|_{L^2(\mathbb{R})} \lesssim \| \nu \|_{\ell^\beta}
\end{equation}
for orthonormal systems $(f_j)_j$ in $L^2(\mathbb{R})$, $\nu = (\nu_j)_j$ in $\ell^{\beta}$, and $\beta < 2$.

We now fix $\beta<2$ and approximate $\gamma_0 = \sum_{j=1}^\infty \nu_j \Pi_{g_j} \in \mathcal{C}^\beta(\dot{H}^{\frac14}(\mathbb{R}))$, for $\nu \in \ell^\beta$ and orthonormal vectors $g_j \in \dot{H}^\frac{1}{4}(\mathbb{R})$, by the sequence of finite-rank operators $(\gamma_0^N)_{N\ge1}$ given by $\gamma_0^N = \sum_{j=1}^N \nu_j \Pi_{g_j}$. For $M > N$, we obtain 
\[
\| \rho_{\gamma_0^N} - \rho_{\gamma_0^M} \|_{2} = \bigg\| \sum_{j=N+1}^M \nu_j |g_j|^2 \bigg\|_{2} \lesssim \bigg( \sum_{j=N+1}^M |\nu_j|^\beta \bigg)^{\frac1\beta} 
\]
from \eqref{e:lieb-sobolev}, and therefore $(\rho_{\gamma_0^N})$ is a Cauchy sequence in $L^2(\mathbb{R})$. Thus, we define $\rho_{\gamma_0} = \sum_{j=1}^\infty \nu_j |g_j|^2 \in L^2(\mathbb{R})$ as the limit of $(\rho_{\gamma_0^N})$ in $L^2(\mathbb{R})$. Since orthonormality of $(f_j)_j$ is preserved under the action of $e^{it\partial_x^2}$ for each $t \in \mathbb{R}$, we may repeat the above to define the density function $\rho_{\gamma(t)} = \sum_{j=1}^\infty \nu_j|e^{it\partial_x^2} g_j |^2 \in L^{2}_x(\mathbb{R})$.

\begin{proof}[Proof of Corollary \ref{c:pointwise}]
Fix $\beta<2$ and $\gamma_0\in\mathcal{C}^\beta(\dot{H}^\frac14(\mathbb{R}))$, and let $\gamma(t) = e^{-it\partial_x^2} \gamma_0 e^{it\partial_x^2}$. Clearly, it suffices to prove
\begin{equation} \label{e:pointwisegoal}
\| \limsup_{t\to 0} |\rho_{\gamma(t)} - \rho_{\gamma_0}| \|_{L^{2,\infty}_x} =0.
\end{equation}
As in the discussion preceding this proof, we approximate $\gamma_0 =  \sum_{j=1}^\infty \nu_j \Pi_{g_j}$ by the finite-rank operator $\gamma_0^N = \sum_{j=1}^N \nu_j \Pi_{g_j}$, and define $\gamma^N(t) = e^{-it\partial_x^2} \gamma^N_0 e^{it\partial_x^2}$. Then we claim that  
\begin{equation}\label{e:0711-1}
	\lim_{N \to \infty} \| \rho_{\gamma(t)} - \rho_{\gamma^N(t)} \|_{L^{2,\infty}_xL^\infty_t} = 0.
\end{equation}
To see this we make use of Theorem \ref{t:maximal} as follows. 
By \eqref{e:max}, we have
\begin{align*}
	\| \rho_{\gamma(t)} - \rho_{\gamma^N(t)}\|_{L^{2,\infty}_xL^\infty_t} 
	&= \bigg\| \sum_{j=N+1}^\infty \nu_j |e^{it\partial_x^2}g_j|^2\bigg\|_{L^{2,\infty}_xL^\infty_t}\\
	&\lesssim \bigg( \sum_{j=N+1}^\infty |\nu_j|^\beta\bigg)^\frac1{\beta}. 
\end{align*}
Since $\gamma_0\in\mathcal{C}^\beta(\dot{H}^\frac14(\mathbb{R}))$ we have $\nu \in\ell^{\beta}$ and hence \eqref{e:0711-1} follows.
  
From the definition of $\rho_{\gamma_0}$ and \eqref{e:0711-1}, for any $\varepsilon>0$, we can find $N_\varepsilon$ such that 
\[
\| \rho_{\gamma_0} - \rho_{\gamma_0^{N_\varepsilon}}\|_{L^{2}_x},\ \| \rho_{\gamma(t)} - \rho_{\gamma^{N_\varepsilon}(t)} \|_{L^{2,\infty}_xL^\infty_t} <\varepsilon. 
\]
For such $N_\varepsilon$, we have 
\begin{align*}
&\,\,\quad \| \limsup_{t\to 0} |\rho_{\gamma(t)} - \rho_{\gamma_0}| \|_{L^{2,\infty}_x} \\
& \leq
\| \limsup_{t\to 0} |\rho_{\gamma(t)} -\rho_{\gamma^{N_\varepsilon}(t)}| \|_{L^{2,\infty}_x} + \| \limsup_{t \to 0} |\rho_{\gamma^{N_\varepsilon}(t)} - \rho_{\gamma^{N_\varepsilon}_0}| \|_{L^2_x} + \| \rho_{\gamma^{N_\varepsilon}_0} - \rho_{\gamma_0} \|_{L^2_x} \\ 
& \leq
2\varepsilon + \| \limsup_{t \to 0} |\rho_{\gamma^{N_\varepsilon}(t)} - \rho_{\gamma^{N_\varepsilon}_0}| \|_{L^2_x}.
\end{align*}
Since $g_j\in \dot{H}^{\frac14}(\mathbb{R})$, it follows from Carleson's result in \cite{carleson} that
\[
\limsup_{t \to 0} \rho_{\gamma^{N_\varepsilon}(t)}(x) = \sum_{j=1}^{N_\varepsilon} \nu_j \limsup_{t\to0} |e^{it\partial_x^2} g_j(x)|^2 = \sum_{j=1}^{N_\varepsilon} \nu_j |g_j(x)|^2 = \rho_{\gamma^{N_\varepsilon}_0}(x)
\]
holds almost everywhere. Hence $\| \limsup_{t \to 0} |\rho_{\gamma^{N_\varepsilon}(t)} - \rho_{\gamma^{N_\varepsilon}_0}| \|_{L^2_x}=0$ and we obtain \eqref{e:pointwisegoal}.
\end{proof}

\section{Additional remarks}  \label{section:remarks}

\subsection{Carleson's problem with data in Besov spaces}
Even though the Sobolev regularity $1/4$ in the classical version of Carleson's problem \eqref{e:carleson} is the optimal one, 
it still seems plausible to obtain a further refinement of the estimate \eqref{e:maxi}, in particular, with data in the Besov spaces  $\dot{B}^{1/4}_{2,2\beta}$. For $\beta>1$, we have $\dot{H}^{1/4} \subset \dot{B}^{1/4}_{2,2\beta}$ and thus we would see an improvement in the classical results on Carleson's pointwise convergence problem in the one-dimensional case. Although we are not able to answer this question here, we may quickly obtain the following related result as an additional  application of Theorem \ref{t:maximal}. 
\begin{proposition} \label{p:Besov}
Suppose $\beta<2$. Then the estimate 
\begin{equation}\label{e:weakBesov}
\| e^{it\partial_x^2} f \|_{L^{4,\infty}_x {\rm BMO}_t(\mathbb{R}^{1+1})} \le C \| f \|_{\dot{B}^{\frac14}_{2,2\beta}}
\end{equation}
holds for all $f \in \dot{B}^{\frac14}_{2,2\beta}$.
\end{proposition}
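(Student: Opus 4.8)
The plan is to decompose $f$ into Littlewood--Paley pieces $f = \sum_k P_k f$, where $P_k$ localizes frequency to $|\xi| \sim 2^k$, and to exploit the fact that each dyadic piece behaves like a rescaling of a single-function version of Theorem~\ref{t:maximal}. The key observation is that ${\rm BMO}_t$ of $e^{it\partial_x^2} f$ should be controlled, after this decomposition, by an $\ell^{2\beta}$-sum over $k$ of the $L^\infty_t$ norms of the pieces --- precisely the structure that the orthonormal estimate \eqref{e:max} is designed to handle once we reinterpret the sum over dyadic blocks as a ``system'' indexed by $k$.

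First I would record the single-function consequence of Theorem~\ref{t:maximal}: taking the system to consist of one function and $\nu = (\nu_1)$, \eqref{e:max} gives $\| \sup_t |e^{it\partial_x^2} g|^2 \|_{L^{2,\infty}_x} \lesssim \|g\|_{\dot H^{1/4}}^2$, i.e. the Kenig--Ponce--Vega bound \eqref{e:maxi} in weak form; more usefully, applying \eqref{e:max} to the system $(f_j)_j$ obtained by $L^2$-normalizing the $|\partial_x|^{1/4} P_{k_j} f$ for a lacunary subsequence (these are orthogonal since their frequency supports are disjoint) with $\nu_j = \| |\partial_x|^{1/4} P_{k_j} f\|_{L^2}^2$, one gets
\begin{equation*}
\bigg\| \sup_t \sum_j |e^{it\partial_x^2} P_{k_j} f|^2 \bigg\|_{L^{2,\infty}_x} \lesssim \bigg( \sum_j \| P_{k_j} f\|_{\dot H^{1/4}}^{2\beta} \bigg)^{1/\beta} = \| f \|_{\dot B^{1/4}_{2,2\beta}}^2
\end{equation*}
(using disjointness of Fourier supports to identify the Besov norm). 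The second step is to pass from this square-function-in-$x$ control to the claimed $L^{4,\infty}_x {\rm BMO}_t$ bound: the left-hand side above is $\|(\sum_j \sup_t|e^{it\partial_x^2}P_{k_j}f|^2)^{1/2}\|_{L^{4,\infty}_x}^2$, and one wants to replace the inner $\sup_t$ and $\ell^2_j$ by a single ${\rm BMO}_t$ norm of $e^{it\partial_x^2}f = \sum_j e^{it\partial_x^2}P_{k_j}f$. This is the standard Littlewood--Paley-to-BMO mechanism: an oscillation $\int_I |e^{it\partial_x^2}f - (e^{it\partial_x^2}f)_I|\,dt$ over an interval $I$ of length $2^{-2k_0}$ splits into high-frequency blocks ($k_j \gtrsim k_0$), each estimated in $L^\infty_t$ and summed via Cauchy--Schwarz against the $\ell^2$-square function, and low-frequency blocks ($k_j \lesssim k_0$), whose oscillation on the short interval $I$ is controlled by $|I| \cdot \|\partial_t e^{it\partial_x^2} P_{k_j} f\|_{L^\infty_t} \sim |I| 2^{2k_j}\|e^{it\partial_x^2}P_{k_j}f\|_{L^\infty_t}$, and the geometric factor $2^{2(k_j - k_0)} \le 1$ makes this sum converge --- again terminating in the $\ell^2_j$ square function at each $x$.

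The main obstacle I anticipate is making the ${\rm BMO}_t$ estimate genuinely pointwise in $x$ with the correct weak-type bookkeeping: one must carry out the oscillation estimate for a \emph{fixed} $x$, bounding the ${\rm BMO}_t$ norm at that point by $(\sum_j \sup_t |e^{it\partial_x^2}P_{k_j}f(x)|^2)^{1/2}$ with a constant independent of $x$ and independent of the interval $I$, and only afterwards take the $L^{4,\infty}_x$ quasi-norm --- which is legitimate since $L^{4,\infty}$ is a quasi-Banach function space and the pointwise inequality transfers. A secondary technical point is justifying the interchange of $\sup_t$ (or the ${\rm BMO}$ supremum over intervals) with the infinite $\ell^2_j$ sum and the convergence of $\sum_j e^{it\partial_x^2}P_{k_j}f$ in a suitable sense; this follows from the a priori finiteness of the right-hand side $\|f\|_{\dot B^{1/4}_{2,2\beta}}$ together with the single-function KPV bound applied to tails. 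Everything else --- the disjoint-support orthogonality, the derivative bound $\|\partial_t e^{it\partial_x^2}P_kf\|_{L^\infty_t} \lesssim 2^{2k}\|e^{it\partial_x^2}P_kf\|_{L^\infty_t}$ via Bernstein, and the identification of the Besov norm --- is routine.
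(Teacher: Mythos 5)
Your overall strategy is exactly the paper's: decompose $f$ into Littlewood--Paley blocks $P_jf$, observe that (after thinning to a lacunary subfamily --- the paper uses the four residue classes $j\in 4\mathbb{Z}+k$ to guarantee disjoint spatial frequency supports) the normalized blocks form an orthonormal system in $\dot H^{1/4}$, apply Theorem~\ref{t:maximal} to bound the $L^{4,\infty}_xL^\infty_t$ norm of the temporal square function by $\|f\|_{\dot B^{1/4}_{2,2\beta}}$, and then pass from the square function to ${\rm BMO}_t$. The paper carries out this last passage cleanly by introducing temporal Littlewood--Paley projections $Q_{2j}$, noting that $e^{it\partial_x^2}P_jf$ has temporal Fourier support in $\{|\tau|\sim 2^{2j}\}$ and is therefore fixed by $Q_{2j}$, and then invoking the standard Littlewood--Paley characterization of ${\rm BMO}$ in the $t$-variable at each fixed $x$.

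The one place where your sketch is not quite right is the high-frequency half of your direct oscillation argument. For $k_j\gtrsim k_0$ you propose to bound the average of $\big|\sum_{k_j\gtrsim k_0}e^{it\partial_x^2}P_{k_j}f(x)\big|$ over $I$ by estimating each block in $L^\infty_t$ and ``summing via Cauchy--Schwarz against the $\ell^2$-square function'', but there is no geometric factor on the high-frequency side: $\sum_{k_j\gtrsim k_0}\|e^{it\partial_x^2}P_{k_j}f(x)\|_{L^\infty_t}$ is an $\ell^1$ sum with no decay and is not controlled by the $\ell^2_j$ quantity. The correct mechanism on the high side is $L^2(I)$ almost-orthogonality --- after Cauchy--Schwarz in $t$ one must estimate $\frac{1}{|I|}\int_I|h_{\mathrm{high}}|^2\,dt$ and use the temporal frequency separation of the blocks $e^{it\partial_x^2}P_{k_j}f(x)$ to kill the cross terms --- which is precisely the content of the Littlewood--Paley square function estimate for ${\rm BMO}$ that the paper simply quotes. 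So either cite that standard estimate directly (as the paper does, via the $Q_{2j}$), or replace the $L^\infty$ argument by the $L^2(I)$ orthogonality argument; with that repair, your proof coincides with the paper's.
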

Unfortunately, $\mathrm{BMO}$ is strictly larger than $L^\infty$. (For the definition of $\mathrm{BMO}$ and the homogeneous Besov spaces $\dot{B}^s_{p,q}$, we refer the reader to \cite{SteinHA}.)
\begin{proof}[Proof of Proposition \ref{p:Besov}]
For each $j\in\mathbb{Z}$, let $P_j$ denote the frequency projection operator (with respect to the spatial variable) given by
\[
\widehat{P_jf}(\xi) = \varphi(2^{-j} \xi)\widehat{f}(\xi),
\]
where $\varphi \in C^\infty_c([-4,4]\setminus [-\frac14,\frac14])$ is chosen such that $\varphi\equiv 1$ on $[-2,2]\setminus[-\frac12,\frac12]$ and $\sum_{j \in \mathbb{Z}} \varphi(2^{-j}\xi) = 1$ for $\xi \neq 0$. Also, we let $Q_j$ denote the frequency projection operator (with respect to the temporal variable) given by
\[
\widehat{Q_jf}(\tau) =  \theta(2^{-j} \tau)\widehat{f}(\tau),
\]
where $\theta$ is a similarly chosen bump function which satisfies $\theta \equiv 1$ on $[-16,16]\setminus[-\frac{1}{16},\frac{1}{16}]$.

For each fixed $x \in \mathbb{R}$, the support of the (temporal) Fourier transform of $t \mapsto e^{i t \partial_x^2} P_j f(x)$ is contained in $\{ \tau \in \mathbb{R} : |\tau| \in [2^{2j - 4}, 2^{2j + 4}]\}$ and therefore the function $t \mapsto e^{it\partial_x^2}P_jf(x)$ is invariant under the action of $Q_{2j}$. It follows from this and the Littlewood--Paley inequality in the temporal variable that 
	\begin{align*}
	\| e^{it\partial_x^2} f \|_{L^{4,\infty}_x {\rm BMO}_t}  &= \bigg\| \sum_{j\in\mathbb{Z}} Q_{2j}(e^{i t \partial_x^2}P_jf) \bigg\|_{L^{4,\infty}_x {\rm BMO}_t} \\
	&\lesssim \bigg\| \bigg( \sum_{j\in\mathbb{Z}} |Q_{2j}(e^{i t\partial_x^2}P_jf)|^2 \bigg)^\frac12 \bigg\|_{L^{4,\infty}_x L^\infty_t} \\
	& \lesssim \sum_{k=0}^3 \bigg\| \bigg(\sum_{j\in 4\mathbb{Z} + k} |e^{it\partial_x^2}P_jf|^2 \bigg)^\frac12\bigg\|_{L^{4,\infty}_x L^\infty_t}.
	\end{align*}
For each fixed $k = 0,1,2,3$, it is readily checked that $(P_jf/\|P_jf\|_{\dot{H}^\frac14})_{j\in 4\mathbb{Z}+k}$ forms an orthonormal system, and thus Theorem \ref{t:maximal} implies
$$
	\bigg\| \bigg(\sum_{j\in 4\mathbb{Z} + k} |e^{it\partial_x^2}P_jf|^2 \bigg)^\frac12\bigg\|_{L^{4,\infty}_x L^\infty_t} \lesssim \bigg( \sum_{j\in 4\mathbb{Z} + k}\|P_jf\|_{\dot{H}^\frac14}^{2\beta}  \bigg)^\frac1{2\beta},
$$
from which we obtain \eqref{e:weakBesov}. 
\end{proof}

\subsection{Other dispersion relations}
For simplicity of the exposition, we have stated our main results in the Introduction in terms of the classical Schr\"odinger operator $U_2$. However, an inspection of our proofs of Theorems \ref{t:maximal} and \ref{t:main} reveal that generalization to a wider class of dispersive equations is possible with straightforward modifications. As a concrete example, Theorem \ref{t:fractional-end} may be generalized to the statement that, for $a > 1$ and $\beta < 2$, we have
\[
\bigg\|  \sum_{j}\nu_j |U_{\frac1a}|\partial_x|^{-\frac{2a-1}{4a}} f_j |^2 \bigg\|_{L^{2,\infty}_tL^\infty_x(\mathbb{R}^{1+1})} \lesssim \| \nu \|_{\ell^\beta}
\]
for all systems of orthonormal functions $(f_j)_j$ in $L^2(\mathbb{R})$ and $\nu=(\nu_j)_j$ in $\ell^\beta$. It is also clear that the identity \eqref{e:change} may be appropriately modified to relate $U_a$ with $U_{1/a}$ with the roles of space and time reversed, and consequently we may deduce that the estimates in Theorems \ref{t:maximal} and \ref{t:main} hold with $U_2$ replaced by $U_a$ for $a > 1$.

\subsection{Further discussion} Finally we make additional comments regarding possible development in different directions. 
\begin{enumerate}
[leftmargin=0cm, labelsep=4pt, parsep=7pt, itemindent=15pt]
\item[$(\rm I)$] (Higher dimensions)
Concerning the classical form of Carleson's problem \eqref{e:carleson} in higher dimensions, Bourgain \cite{Bourgain-point2} showed the necessary regularity condition $s\ge \frac12 - \frac1{2(d+1)}$, and Du--Guth--Li \cite{Du-Guth-Li} ($d=2$) and Du--Zhang \cite{Du-Zhang} ($d\ge3$) recently proved that the condition $s> \frac12 - \frac1{2(d+1)}$ suffices, thus leaving open only the endpoint case. Their essentially definitive results built on a number of significant prior work including, for example, \cite{Bourgain-point1,Lee-point,Luca-Rogers-1,Luca-Rogers-2}. 
There are also numerous results on variants of Carleson's problem; see \cite{Bailey,ChoKo,CLV,DS,LY,Shiraki,Sjolin1,Sjolin2,SS}. We believe it is an interesting problem to extend Corollary \ref{c:pointwise} to higher dimensions. However, the arguments in \cite{Du-Guth-Li,Du-Zhang} are very far from the ones we have used in this article, and obtaining a sharp version of \eqref{e:carleson-density} for higher dimensions looks very challenging.

\item[$(\rm I\!I)$](Nonlinear equations)
We considered the pointwise convergence \eqref{e:carleson-density} for the free solution $\gamma(t) = e^{-it\partial_x^2} \gamma_0 e^{it\partial_x^2}$ as a first step toward generalization of the classical maximal estimates to orthonormal systems of initial data.  However, from the perspective of the quantum mechanics, it is more natural to consider \eqref{e:carleson-density} with $\gamma(t)$ which is a solution of the nonlinear equation \eqref{e:Hartree-general}. Related to this problem,  we note that Compaan--Luc\'{a}--Staffilani \cite{CLS} recently investigated the behavior of the solution to the nonlinear Schr\"{o}dinger equation as $t\to0$. 

\item[$(\rm I\!I\!I)$](Improving summability with higher regularity) 
Our pointwise convergence result in Corollary \ref{c:pointwise} is given under the optimal regularity assumption $s=\frac14$. However, there is no reason to restrict ourselves to the specific regularity exponent when dealing with orthonormal systems of initial data. In fact, it seems to be natural to expect a gain of summability in the exponent $\beta$ by imposing higher regularity. Such kind of tradeoff between regularity and summability has been already observed in \cite{BHLNS}. The problem of characterizing $\beta = \beta(s)$ for which the pointwise convergence \eqref{e:carleson-density} holds for $\gamma_0 \in \mathcal{C}^\beta(H^s(\mathbb{R}))$ remains open.  Corollary \ref{c:pointwise} only ensures that $\beta < 2$ is sufficient for all $s\ge\frac14$. 

\end{enumerate}

\begin{acknowledgements}
This work was supported by JSPS Kakenhi grant numbers 18KK0073 and 19H01796  (Bez), Korean Research Foundation Grant  no.  NRF-2018R1A2B2006298 (Lee), and Grant-in-Aid for JSPS Research Fellow no. 17J01766 (Nakamura). 
\end{acknowledgements}

\end{document}